\newtheorem{theorem}{Theorem}[section]
\newtheorem{lemma}[theorem]{Lemma}
\newtheorem{corollary}[theorem]{Corollary}
\newtheorem{proposition}[theorem]{Proposition}
\theoremstyle{definition}
\newtheorem{definition}[theorem]{Definition}
\newtheorem{example}[theorem]{Example}
\newtheorem{notation}[theorem]{Notation}
\newtheorem{question}[theorem]{Question}
\theoremstyle{remark}
\numberwithin{equation}{section}
\newcommand{\defterm}[1]{\emph{#1}}
\DeclareMathOperator{\mr}{mr}
\DeclareMathOperator{\tri}{tri}
\DeclareMathOperator{\qual}{{\mathcal Q}}
\DeclareMathOperator{\mqual}{{\mathcal R}}
\newcommand{\X}{\mathcal X}
\newcommand{\Y}{\mathcal Y}
\newcommand{\F}{\mathbb F}
\newcommand{\setoneto}[1]{[#1]}
\newcommand{\intcl}[1]{\widehat{#1}}
\tikzstyle{vertex}=[circle, draw, inner sep=0pt, minimum size=6pt]
\newcommand{\vertex}{\node[vertex]}
\definecolor{louiscolor}{named}{blue}
\DeclareMathOperator{\cl}{cl}
\begin{document}
\title[Matrix patterns and matroid adjoints]{Matroid adjoints and the minimum rank of zero-nonzero matrix patterns}

\author[L.~Deaett]{Louis Deaett}
\address{Department of Mathematics and Statistics\\
  Quinnipiac University\\
Hamden, Connecticut}
\email{louis.deaett@quinnipiac.edu}
\author[K.~Grace]{Kevin Grace}
\address{Department of Mathematics and Statistics\\
University of South Alabama\\
Mobile, Alabama}
\email{kevingrace@southalabama.edu}

\subjclass{05B35, 05C50, 15B35, 15A03.}
\date{\today}

\begin{abstract}

The problem of finding the minimum rank of a matrix with a given zero-nonzero pattern has been generalized to a class of matroids associated to the pattern.  The fundamental lower bound known as the triangle number still holds in this generalized setting.  But the matroid minimum rank of a pattern need not match that of its transpose.

We associate to each pattern $\X$ a lattice $L(\X)$.
We define the \emph{fundamental pattern} of a matroid $M$ to be the complement of its hyperplane-point incidence pattern and
note that when $\X$ is the fundamental pattern of $M$, the lattice of flats of $M$ is $L(\X)$.
We then prove that, for every pattern $\X$, the dual lattice of $L(\X)$ is isomorphic to $L(\X^T)$.

We show that a matroid $M'$ of the same rank as $M$ is an adjoint of $M$ if and only if $M'$ is associated with the transpose of the fundamental pattern of $M$. Our main result ties together the notion of a matroid adjoint with the phenomenon of a gap between the triangle number $k$ and the matroid minimum rank of a pattern. Namely, we show that, if any matroid of rank $k$ associated with a pattern has an adjoint, then there is no such gap for the pattern's transpose.

We show that the matroid of minimum rank associated with the fundamental pattern is unique.  Using this, we prove that the matrix minimum rank of the fundamental pattern of a matroid over different fields depends on the representability of the matroid over those fields. This allows us to recover and improve upon a construction of Berman, Friedland, Hogben, Rothblum, and Shader (2008).  We also give a smaller example than any previously known of a pattern with a matroid minimum rank smaller than its matrix minimum rank over every field.  Finally, we establish that, for the fundamental pattern, a converse holds to our main result.  In particular, a matroid with fundamental pattern $\X$ has an adjoint if and only if the matroid minimum rank of $\X^T$ is equal to its triangle number.

\end{abstract}

\maketitle

\section{Introduction}\label{sec:introduction}

A theme of study in combinatorial matrix theory is the investigation of how some combinatorial description of a matrix reflects or reveals properties of the linear operator represented by that matrix.
One such combinatorial description is given by the \defterm{zero-nonzero pattern} of the matrix, which specifies exactly which of its entries are zero and which are nonzero.  One may then ask, among other questions, what this description implies about the rank of the matrix.  The question of the largest possible rank is well understood; it is given by the \defterm{term rank} of the pattern (see Section \ref{sec:min rank problems}).  On the other hand, the smallest possible rank, the \emph{(matrix) minimum rank} of the pattern, has a more subtle behavior, and may depend on the field within which the matrix entries are supposed to lie.

The problem of understanding how combinatorial properties of a zero-nonzero pattern relate to its matrix minimum rank has been well studied.  (See, for example, the seminal work of \cite{hs1993}, as well as \cite{johnson_and_canto,johnson_and_link,johnson_and_zhang} and \cite{bfhhhhs2009}. Also note related work in the context of sign patterns; see \cite[Section 42.6]{hog17} for a summary.)  In some work, notably \cite{berman_et_al} and \cite{d20}, a connection with the combinatorics of matroids is established and used to study this problem.  In particular, \cite{d20} introduces a parameter known as the \defterm{matroid minimum rank} of the pattern, which generalizes its matrix minimum rank by attaching to the pattern a class of matroids, one that includes and extends the class of matroids represented by the matrices with that pattern.
The goal is not just to understand this generalized minimum rank parameter for its own sake, but also to use matroid theory to better understand the behavior of the matrix minimum rank.

With that in mind, one notable property of the matroid minimum rank of a pattern is that it is not in general preserved by the transpose.  This was already noted in \cite{d20}, but no theoretical account was given there to accompany this observation.  Here, we will examine this phenomenon by establishing a connection between the behavior of the matroid minimum rank under transpose and the notion of a matroid \defterm{adjoint}.

The notion of a matroid adjoint seems to have first been introduced by Crapo \cite{c71}. However, as shown by Cheung \cite{Cheung}, not every matroid has an adjoint. 
Adjoints of matroids
received significant attention in the 1980s and 1990s in, e.g., \cite{bk86,bc88,bw89,akw90,ah95}.  More recently, there has been renewed interest in matroid adjoints, as evidenced by the recent work in \cite{fjk23,ftw24} and others.

Our present work connects adjoints of the matroids associated with a pattern (and the question of whether or not such adjoints exist) with the matter of how the matroid minimum rank is affected by taking the transpose of the pattern.
We pay particular attention to the question of when this minimum rank coincides with its most fundamental combinatorial lower bound, that of the \defterm{triangle number}.
Our main result (\cref{thm:One Big Theorem}) shows that this always happens when some matroid of the same rank associated with the transpose has an adjoint.  This suggests a strong connection between the question (studied elsewhere) of when a matroid has an adjoint and that of when a pattern has a minimum rank meeting this lower bound.

One important special case occurs when there is a unique matroid $M$ of minimum rank associated with a pattern $\X$.  We show that this is the case when $\X$ is a certain pattern encoding the incidence between the hyperplanes and points of $M$, and we illustrate how this may be used in conjunction with representability properties of $M$ to show that the matrix minimum rank of $\X$ differs over different fields.  We also have a strong connection between an adjoint of $M$ and a matroid of the same rank associated with the transpose of $\X$; Theorem \ref{thm:adjoint and transpose of hyperplane-point pattern} shows that these notions are equivalent.  This gives a converse to our main result in this context, in that $M$ has an adjoint precisely when $\X^T$ has a matroid minimum rank equal to
its triangle number.

In the remainder of this paper,
Section \ref{sec:preliminaries} presents the definitions and basic observations we need regarding matroids, matrix patterns, and lattices.
\cref{sec:min rank problems} sets out
questions on how
combinatorial properties of
a zero-nonzero pattern constrain the rank of a matrix or matroid associated to it; in particular, both the matrix and the matroid minimum rank of a pattern are defined.
Section \ref{sec:transposes and adjoints} examines how the combinatorial structures that we associate with a pattern
change under the transpose.  There we see how the phenomenon whereby the matroid minimum rank of a pattern may exceed its triangle number is connected with a failure of adjoints to exist for matroids in the class associated with its transpose.
We then introduce a construction that gives, for each matroid $M$, a pattern such that $M$ is the unique matroid of minimum rank in the class associated with that pattern.
In Section \ref{sec:uniqueness}, we prove that for this specific pattern there is an even  
stronger connection between the existence of adjoints and the effect of taking the transpose on the minimum rank.
In that section we also show that this pattern coincides with one constructed in \cite{berman_et_al}, which focused on how the matrix minimum rank of a pattern may depend on the field.
In fact, the pattern with the uniqueness properties that we exploit in Sections \ref{sec:transposes and adjoints} and \ref{sec:uniqueness} shares its purpose with, but is more universal than, a construction given in \cite{d20} that applies only to matroids of rank $3$.  How this construction could be generalized to give more powerful results is one of the open questions we set out in Section \ref{sec:questions and future directions}, where we outline some directions for future research.

\section{Preliminaries}\label{sec:preliminaries}

In this section, we recall several concepts that will be essential to our investigation in this paper.

\subsection{Matroids}

We assume the reader to be familiar with the notion of a matroid. 
However, for easy reference, we frequently cite Oxley \cite{o11} for routine or well-known matroid results. Additionally, we follow \cite{o11} for matroid notation and terminology not defined in this paper.

In terms of matroids, the notion most essential for our purposes is that of a \defterm{flat}, a set that is rank-maximal, i.e., such that adding any additional element to the set would increase its rank.
A \emph{hyperplane} of a matroid is a flat whose rank is $1$ less than the rank of the matroid.

\subsection{Matrix patterns}

In combinatorial matrix theory, it is common to consider some description of a matrix that captures only the signs of its entries, or perhaps only the location of its nonzero entries.  Such a description is, in general, known as a \defterm{matrix pattern}.  The case of matrix \defterm{sign patterns} is well studied (see \cite[Chapter 42]{hog17}) and has a rich history of applications.  The closely-related notion of a matrix pattern we focus on here has also received a good deal of attention, and begins with the following.

\begin{definition}\label{def:zero-nonzero pattern}
    A \defterm{zero-nonzero pattern} (or simply a
    \defterm{pattern} for short) is a matrix with entries from the set $\{0,*\}$. In particular, if $A$ is any matrix (with entries from some set that includes $0$), then the \defterm{zero-nonzero pattern of $A$} is obtained from $A$ by replacing each nonzero entry with $*$.
\end{definition}

\begin{example}\label{ex:Example 25 pattern}
An important example we consider in this paper is the zero-nonzero pattern given as \cite[Example 25]{d20}, namely
\begin{equation}\label{eqn:ex 25 pattern}
\X = \begin{bmatrix}
		* & 0 & * & 0 & * & 0 & * \\  
		* & 0 & 0 & * & 0 & * & * \\  
		* & * & 0 & 0 & * & * & 0 \\  
		* & * & * & * & 0 & 0 & 0 \\  
		* & * & * & * & 0 & 0 & * \\  
		0 & * & 0 & 0 & * & * & 0 \\  
		0 & 0 & * & 0 & * & 0 & * \\  
		0 & 0 & 0 & * & 0 & * & * \\  
\end{bmatrix}.
\end{equation}
\end{example}

\begin{definition}
Let $\X$ be a zero-nonzero pattern whose set of column labels is $C$ and whose set of row labels is $R$, and let $x\in R$. The \emph{support} of $x$ is the subset of $C$ consisting of the labels of those columns having a nonzero entry in row $x$.
Denote the support of $x$ by $S(x)$. Then $Z(x)=C-S(x)$ is the \emph{zero set} of $x$. Similarly, if $x\in C$, then the \emph{support} of $x$, denoted by $S(x)$, is the subset of $R$ 
consisting of the labels of those rows having a nonzero entry in column $x$, and $Z(x)=R-S(x)$ is the \emph{zero set} of $x$.
\end{definition}

We extend the definition of the zero set as follows.

\begin{definition}
\label{def:zeroset}
Let $\X$ be a zero-nonzero pattern. If $A$ is a set either of column labels or of row labels of $\X$, then the \emph{zero set} of $A$, denoted $Z(A)$, is $\bigcap\limits_{a\in A}Z(a)$.
\end{definition}

\subsection{Lattices}
Another notion that will be central to what follows is that of a lattice.  (The notion of ``lattice'' here is that of order theory; see \cite{crawley_dilworth} for a comprehensive reference on this topic.)

\begin{definition}
    A \defterm{lattice} is a partially ordered set $L$ such that, for every $x,y\in L$, there exists a unique least upper bound (called the \defterm{join}) of $x$ and $y$ and a unique greatest lower bound (called the \defterm{meet}) of $x$ and $y$.
\end{definition}

The flats of a matroid $M$, when ordered by inclusion, form a lattice, called the \emph{lattice of flats} of $M$. We denote this lattice by $L(M)$.

\begin{definition}
The \defterm{dual} $L^*$ of a lattice $L$ is obtained by reversing all order relations. (In 
terms of Hasse diagrams, $L^*$ is obtained by ``turning $L$ upside down.'') 
\end{definition}

Importantly, lattice duality does not coincide with matroid duality, in the sense that, in general, $(L(M))^*\neq L(M^*)$.

\begin{definition}
\label{def:intersection-lattice}
Let $E$ be a finite set and let $\mathcal{C}$ be a collection of subsets of $E$. The \emph{intersection lattice} generated by $\mathcal{C}$, denoted by $L(\mathcal{C})$, is the lattice, ordered by inclusion, whose elements are all intersections of sets in $\mathcal{C}$ (including $E$ as the ``empty intersection'').
\end{definition}

It is not difficult to see that $L(\mathcal{C})$ is indeed a lattice. If $X,Y\in L(\mathcal{C})$, then the meet of $X$ and $Y$ is $X\cap Y$. The join of $X$ and $Y$ also must exist, since we may collect all of the sets in $L(\mathcal{C})$ that contain both $X$ and $Y$, and then the intersection of all of these is bound to be in $L(\mathcal{C})$ as well.

\begin{definition}\label{def:lattice of pattern}
Let $\X$ be a zero-nonzero pattern with $n$ columns. The \defterm{intersection lattice} associated with $\X$, denoted by $L(\X)$, is the intersection lattice generated by $\mathcal{C}$, where $\mathcal{C}$ is the collection of subsets of $\setoneto{n}$ consisting of the zero sets of the rows of $\X$.
\end{definition}

\subsection{The fundamental pattern of a matroid}
\label{subsection:complement}

The \emph{hyperplane-point incidence pattern} of a matroid is the zero-nonzero pattern with a row for each hyperplane and a column for each point, in which the $(i,j)$-entry is $*$ if and only if hyperplane $i$ contains point $j$. Therefore, the \emph{complement} of the hyperplane-point incidence pattern of a matroid is the zero-nonzero pattern $\X$ with a row for each hyperplane and a column for each point such that the $(i,j)$-entry of $\X$ is $0$ if and only if hyperplane $i$ contains point $j$. The significance of this pattern leads us to define the following term.

\begin{definition}
    \label{def:fundamental}
    The \emph{fundamental pattern of a matroid} is the complement of its hyperplane-point incidence pattern. 
\end{definition}

Note that the collection of hyperplanes of a matroid completely determines the matroid. In particular, every flat of a matroid is an intersection of some of its hyperplanes. This leads to the following result. 

\begin{lemma}
    \label{lem:CHPIP generates lattice of flats}
    Let $\X$ be the fundamental pattern of a matroid $M$. Then $L(\X)=L(M)$.
\end{lemma}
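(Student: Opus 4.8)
The plan is to exhibit an order-isomorphism between the lattice of flats $L(M)$ and the intersection lattice $L(\X)$, given by sending each flat to the set of points it contains. Write $P$ for the set of points of $M$, which indexes the columns of $\X$, and recall that the rows of $\X$ are indexed by the hyperplanes of $M$. First I would record the effect of the ``complement'' in \cref{def:fundamental} at the level of zero sets: since the entry of $\X$ in the row for a hyperplane $H$ and the column for a point $p$ is $0$ exactly when $H$ contains $p$, the zero set $Z(H)$ is precisely the set of points of $M$ contained in $H$. Thus the generating collection $\mathcal{C}$ of \cref{def:lattice of pattern} consists of the sets $\{p\in P : p\subseteq H\}$ as $H$ ranges over the hyperplanes of $M$.

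Next I would define $\phi\colon L(M)\to L(\X)$ by letting $\phi(F)$ be the set of points contained in the flat $F$, and check that it is well-defined with image all of $L(\X)$. For the first part, every flat $F$ is the intersection of the hyperplanes containing it, so a point $p$ lies in $F$ if and only if it lies in every such hyperplane; hence $\phi(F)=\bigcap_{H\supseteq F}Z(H)$, which is an intersection of sets in $\mathcal{C}$ and so belongs to $L(\X)$ (the case $F=E$ being the empty intersection giving all of $P$). For surjectivity, an arbitrary element of $L(\X)$ has the form $\bigcap_{H\in\mathcal H'}Z(H)$ for some collection $\mathcal H'$ of hyperplanes; setting $F=\bigcap_{H\in\mathcal H'}H$, which is a flat as an intersection of flats, basic set theory gives $\bigcap_{H\in\mathcal H'}Z(H)=\{p : p\subseteq F\}=\phi(F)$.

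The heart of the argument is showing that $\phi$ is an order-isomorphism, and here the key matroid fact is that $L(M)$ is geometric: every flat is the join, i.e.\ the closure of the union, of the points it contains. One direction is immediate, as $F\subseteq G$ forces $\phi(F)\subseteq\phi(G)$. For the converse, if $\phi(F)\subseteq\phi(G)$ then, taking closures, $F=\cl\!\bigl(\textstyle\bigcup\phi(F)\bigr)\subseteq\cl\!\bigl(\textstyle\bigcup\phi(G)\bigr)=G$; in particular $\phi$ is injective, so it is an isomorphism of posets and hence of lattices. The main obstacle I anticipate is precisely this step: verifying that a flat is recovered from its set of points, which is where one must invoke the geometric structure of the lattice of flats so that the map does not collapse distinct flats (note the extreme cases behave correctly, since $\phi(\cl(\emptyset))=\emptyset$ and $\phi(E)=P$). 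Once the order-isomorphism is established, identifying each flat with its set of points yields $L(\X)=L(M)$, as claimed.
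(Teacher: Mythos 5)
Your proof is correct, but it takes a genuinely different route from the paper's, and the difference traces back to how the columns of $\X$ are read. The paper labels the columns by the ground set $[n]$ itself, so the zero set of the row indexed by a hyperplane $H$ is \emph{literally} $H$ as a subset of $[n]$; then $L(\X)$ is by definition the collection of all intersections of hyperplanes, and the lemma follows immediately from the same two standard facts you use --- intersections of flats are flats, and every flat is an intersection of hyperplanes \cite[Proposition 1.7.8]{o11} --- with no map to construct and literal equality of lattices as the conclusion. You instead read the columns as rank-$1$ flats, which forces you to build an order-isomorphism $\phi$ and to invoke the extra geometric-lattice fact that a flat is the closure of the union of the points it contains; your verification of that step (including the behavior at $\cl(\emptyset)$ and the handling of loops) is sound, so the argument goes through. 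What your reading buys is compatibility with the ``points are rank-$1$ flats'' usage that the paper itself adopts later (e.g., in the proof of \cref{thm:adjoint and transpose of hyperplane-point pattern}); what it costs is that you obtain only an isomorphism $L(\X)\simeq L(M)$ plus an identification, whereas the stated equality $L(\X)=L(M)$ is used literally downstream (e.g., in \cref{prop:closure}, where $E(M)=C$ and flats \emph{are} sets of columns). Under the paper's convention your $\phi$ degenerates to the identity and your proof collapses to the paper's; to match the lemma as stated, you should either adopt that convention outright or remark that your identification is the identity on the ground-set labeling.
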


\begin{proof}
    Let $[n]$ be the ground set of $M$. Then $[n]$ is also the set of column labels of $\X$. A subset of $[n]$ is a member of $L(\X)$ if and only if it is the intersection of the zero sets of some collection of rows of $\X$. The zero sets of the rows of $\X$ are precisely the hyperplanes of $M$. Thus, a subset of $[n]$ is a member of $L(\X)$ if and only if it is the intersection of some collection of hyperplanes of $M$.

    Now, every hyperplane of a matroid is also a flat, and every intersection of flats is also a flat \cite[Lemma 1.7.3]{o11}. Therefore, every intersection of hyperplanes is a flat. Conversely, by \cite[Proposition 1.7.8]{o11}, every flat is an intersection of hyperplanes. It follows that a subset of $[n]$ is a member of $L(M)$ if and only if it is the intersection of some collection of hyperplanes of $M$. Therefore, $L(\X)=L(M)$. 
\end{proof}

\section{Minimum rank problems}\label{sec:min rank problems}

We now survey two problems concerned with determining how combinatorial properties of a zero-nonzero pattern reflect or constrain the rank of an object associated with that pattern.  As noted in Section \ref{sec:introduction}, the most interesting such questions concern how small the rank can be under such constraints.  So the following notation will be helpful.

\begin{notation}
Given a collection $\mathcal C$ of objects, each having some nonnegative integer rank, we write $\mr \mathcal C$ to denote the smallest rank of an object in $\mathcal C$.
\end{notation}

\subsection{The minimum rank problem for matrices}

Of course, the most natural object with combinatorics that can be captured by a zero-nonzero pattern is a matrix.

\begin{definition}
Let $\X$ be a zero-nonzero pattern and $\mathbb F$ be a field.  We let $\qual_{\mathbb F}(\X)$ denote the \defterm{qualitative class of matrices over $\mathbb F$ with pattern $\X$}, which is simply the set of all matrices with the pattern $\X$ and entries in $\mathbb F$.
In a context in which the choice of field $\mathbb F$ is implied, or irrelevant, we may  simply write $\qual(\X)$.
\end{definition}

\begin{definition}
Let $\X$ be a zero-nonzero pattern and $\mathbb F$ be a field. The \defterm{matrix minimum rank of $\X$ over $\mathbb F$} is the smallest rank of a matrix over $\mathbb F$ that has the pattern $\X$, i.e., it is the value of $\mr\qual_{\mathbb F}(\X)$.
\end{definition}

The \defterm{matrix minimum rank problem} for zero-nonzero patterns is that of describing how combinatorial properties of a pattern $\X$ constrain its matrix minimum rank.  In fact, the more general problem of determining all values possible for the rank of a matrix in $\qual_{\F}(\X)$ is not actually more substantial, which may be seen following two observations.  First, when $\mathbb F$ has at least $3$ elements, the maximum rank of a matrix in $\qual_{\F}(\X)$ is the \defterm{term rank} of $\X$,
a combinatorial parameter that is well understood \cite{br91}.
It is defined as follows.

\begin{definition}\label{def:term rank}
Let $\X$ be a zero-nonzero pattern.  The maximum number of $*$ entries that can be chosen from $\X$ such that no two of the entries lie within the same row or column is the \defterm{term rank} of $\X$.
\end{definition}

Second, it is easy to see that every rank in between the minimum and the maximum can be achieved; in particular, one may start with a matrix achieving the minimum rank and then, by changing just one entry at a time, produce a sequence of matrices that ends with one achieving the maximum rank.  Since modifying a single entry in a matrix can change its rank by at most $1$, it follows that this sequence will include a matrix of every rank from the minimum to the maximum.

\subsection{The minimum rank problem for matroids}

Following \cite{d20}, we may also associate to each zero-nonzero pattern a class of matroids.

\begin{definition}\label{def:matroid qualitative class}
Let $\X$ be a zero-nonzero pattern with $n$ columns.  We let $\mqual(\X)$ denote the set containing every matroid $M$ on ground set $\setoneto{n}$ with the property that the zero set of each row of $\X$ is a flat of $M$.
\end{definition}

Note that the class $\mqual(\X)$
is never empty; for example, it contains the free matroid on $\setoneto{n}$.  
In addition, an abundant class of matroids residing in $\mqual(\X)$ is given by the following crucial result.

\begin{theorem}[{\cite[Theorem 12]{d20}}]\label{thm:matrix in Q represents matroid in R}
    For each matrix $A$ in $\qual(\X)$, the matroid $M[A]$ represented by $A$ is in $\mqual(\X)$.
\end{theorem}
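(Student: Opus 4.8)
The plan is to unwind the definitions of $M[A]$ and of $\mqual(\X)$ and reduce the claim to a single structural fact about vector matroids. Suppose $A\in\qual(\X)$ is an $m\times n$ matrix over $\F$. By definition, $M[A]$ is the vector matroid on the ground set $\setoneto{n}$ whose elements are the columns $c_1,\dots,c_n\in\F^m$ of $A$, with matroid independence meaning linear independence. Since $\mqual(\X)$ consists of exactly those matroids on $\setoneto{n}$ in which the zero set of each row of $\X$ is a flat, it suffices to verify that for each row label $x$ of $\X$, the zero set $Z(x)$ is a flat of $M[A]$.

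The key observation is that $Z(x)$ admits a clean linear-algebraic description. Let $\phi_x\colon\F^m\to\F$ denote projection onto the coordinate indexed by $x$, so that $\phi_x(c_j)$ equals the $(x,j)$-entry of $A$. Then $j\in Z(x)$ precisely when that entry is $0$, i.e., when $c_j\in\ker\phi_x$. Writing $H_x=\ker\phi_x$ for this coordinate hyperplane of $\F^m$ and identifying each ground-set element $j$ with its column $c_j$, this says exactly that $Z(x)$ is the set of ground-set elements lying in the subspace $H_x$.

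It then remains to record the general fact that, in any vector matroid, the set of ground-set elements lying in a given linear subspace is a flat. I would establish this from the standard description of the closure operator of a vector matroid, namely $\cl(S)=\{e:c_e\in\operatorname{span}\{c_f:f\in S\}\}$. Applying this to $S=Z(x)$ and using that $\operatorname{span}\{c_j:j\in Z(x)\}\subseteq H_x$ (since each such $c_j$ already lies in the subspace $H_x$), one obtains $\cl(Z(x))\subseteq\{e:c_e\in H_x\}=Z(x)$; the reverse inclusion $Z(x)\subseteq\cl(Z(x))$ holds automatically. Hence $Z(x)$ is closed, i.e., a flat of $M[A]$, and as this holds for every row $x$ we conclude $M[A]\in\mqual(\X)$.

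I do not expect a serious obstacle here: once the zero set is reinterpreted as the set of columns lying in a coordinate hyperplane, the result falls out of the closure formula. The only point requiring care is the one nontrivial containment $\cl(Z(x))\subseteq Z(x)$, which rests on the elementary observation that the span of a set of vectors contained in a subspace remains inside that subspace. Note also that the argument is insensitive to repeated or zero columns (parallel elements and loops), since the description $Z(x)=\{j:c_j\in H_x\}$ and the closure formula hold verbatim in those cases.
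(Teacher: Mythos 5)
Your proof is correct. Note that this paper does not actually prove the statement---it quotes it as Theorem~12 of the cited reference \cite{d20}---but your argument is the natural, self-contained one: reinterpreting the zero set $Z(x)$ as the set of columns lying in the coordinate hyperplane $\ker\phi_x$, and then using the closure formula $\cl(S)=\{e : c_e\in\operatorname{span}\{c_f : f\in S\}\}$ to get $\cl(Z(x))\subseteq Z(x)$, is exactly the standard route, and every step (including the handling of loops and parallel elements) checks out.
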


\cref{thm:matrix in Q represents matroid in R} shows that the class $\mqual(\X)$ is a generalization of the class $\qual(\X)$ in the sense that  $\mqual(\X)$ contains every matroid represented by a matrix in $\qual(\X)$, and potentially some additional matroids as well.

\begin{definition}
Let $\X$ be a zero-nonzero pattern. We refer to $\mr\mqual(\X)$ as the \defterm{matroid minimum rank of $\X$}.
\end{definition}

The \defterm{matroid minimum rank problem} for zero-nonzero patterns is that of describing how combinatorial properties of a pattern $\X$ constrain its matroid minimum rank.
In a parallel with the matrix minimum rank problem, the general question of what values are possible for the rank of a matroid in $\mqual(\X)$ again reduces to determining the minimum.
This is true since, as noted above, the free matroid is in $\mqual(\X)$ and shows that the maximum rank is equal to the number of columns of $\X$, while every rank in between the minimum and maximum is achieved by some matroid, which we will now prove.
As with the matrices in $\qual(\X)$, the key observation is that we can begin with an object of minimum rank and transform it into one of maximum rank by way of a series of steps, each of which can change the rank by at most $1$.  

First, given a zero-nonzero pattern $\X$ with $n$ columns, let $\X_0=\X$.
Denote by $\mathcal I_n$ the zero-nonzero pattern of the $n\times n$ identity matrix.
Then, for each $i$ with $1 \le i \le n$,  let $\X_i$ be the result of appending to $\X_0$ the first $i$ rows of $\mathcal I_n$.  Finally, for each $i$ with $0 \le i \le n$, take $M_i$ to be any matroid of minimum rank in $\mqual(\X_i)$.
It is clear from \cref{def:matroid qualitative class} that
\[
\mqual(\X) = \mqual(\X_0) \supseteq \mqual(\X_1) \supseteq \cdots \supseteq \mqual(\X_n),
\]
so that, in particular, each $M_i$ is in $\mqual(\X)$.  Note that $r(M_0)$ is the matroid minimum rank of $\X$.  Also note that,
since $L(\mathcal I_n)$ includes every subset of $\setoneto{n}$, the free matroid on $\setoneto{n}$ is the only matroid in $\mqual(\mathcal I_n) \supseteq \mqual(\X_n)$.  Hence, $M_n$ must be this matroid, and so 
$r(M_n)=n$.  It therefore remains to show only that the ranks of two matroids adjacent in the sequence $M_0,M_1,\ldots,M_n$ can differ by at most $1$.  This follows from the fact that appending any one additional row to a zero-nonzero pattern cannot change its matroid minimum rank by more than $1$, a fact that we prove as the following theorem.

\begin{theorem}
\label{thm:append_row}
Let $\X$ be a zero-nonzero pattern, and let $\X'$ be obtained from $\X$ by appending one row. Then $\mr\mqual(\X)\leq\mr\mqual(\X')\leq\mr\mqual(\X)+1$.
\end{theorem}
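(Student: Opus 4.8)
The plan is to treat the two inequalities separately. Write $n$ for the number of columns of $\X$ (and of $\X'$, since appending a row leaves the columns unchanged), and let $Z\subseteq\setoneto{n}$ denote the zero set of the single row appended to form $\X'$. Because $\X'$ has exactly the rows of $\X$ together with one additional row, \cref{def:matroid qualitative class} gives $\mqual(\X')\subseteq\mqual(\X)$: a matroid lies in $\mqual(\X')$ precisely when it lies in $\mqual(\X)$ and, in addition, has $Z$ as a flat. Minimizing rank over the smaller set $\mqual(\X')$ can only yield a value at least as large, so $\mr\mqual(\X)\le\mr\mqual(\X')$. This disposes of the left-hand inequality, and it remains to establish $\mr\mqual(\X')\le\mr\mqual(\X)+1$.

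For the upper bound, I would fix a matroid $M\in\mqual(\X)$ of minimum rank $r=\mr\mqual(\X)$. If $Z$ is already a flat of $M$, then $M\in\mqual(\X')$ and we are done. Otherwise the goal is to produce from $M$ a matroid $M'$ on $\setoneto{n}$ with $r_{M'}\le r+1$ that (i) retains every flat of $M$ as a flat and (ii) has $Z$ as a flat; property (i) guarantees that the zero sets of the rows of $\X$, being flats of $M$, remain flats of $M'$, so that together with (ii) we get $M'\in\mqual(\X')$, whence $\mr\mqual(\X')\le r+1$. The representable case makes the right construction transparent: if $M=M[A]$ for some matrix $A$ over a field $\F$, append to $A$ one new row whose $j$-th entry is $0$ for $j\in Z$ and a generic nonzero scalar for $j\notin Z$. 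The resulting matrix $A'$ has rank at most $r+1$; for any flat $F$ of $M$ and any $j\notin F$, the $j$-th column of $A'$ stays outside the span of the $F$-columns (project onto the first $r$ coordinates), so $\cl_M(F)=F$ forces $\cl_{M'}(F)=F$ and $F$ remains a flat; and no column indexed by $\setoneto{n}\setminus Z$ lies in the span of the $Z$-columns, since the former has nonzero final coordinate while the latter all have final coordinate $0$, so $Z$ becomes a flat. Thus $M'=M[A']$ works.

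The main obstacle is to carry out this construction for an arbitrary, possibly non-representable, matroid $M$. The matrix operation above is an instance of a single-element \emph{lift}: the matroid $M'$ is $N\setminus e$ for a suitable single-element coextension $N$ of $M$ with $M=N/e$, realized in the representable picture by adjoining the column $(0,\dots,0,1)^{T}$ to $A'$. Two general facts about lifts are what the argument needs. First, because $M$ is a quotient of any of its lifts, every flat of $M$ is a flat of $M'$, giving (i), while passing to a single-element lift raises the rank by at most $1$, giving $r_{M'}\le r+1$. Second, and this is the crux, one must show that a coextension $N$ can be chosen so that $Z$ becomes a flat of $M'=N\setminus e$. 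I would obtain such an $N$ from a suitable modular cut, equivalently the corresponding single-element extension of $M^{*}$, designed to reproduce at the level of closure the separation that the generic new coordinate performs in the representable case, namely pulling the elements of $\cl_M(Z)\setminus Z$ away from $Z$. Verifying that this prescription defines a matroid and that $Z$ is indeed closed in $N\setminus e$ is the technical heart of the proof; the remaining bookkeeping, that the rank rises by at most one and that the flats of $M$ survive, then follows from the standard theory of lifts and quotients, for which we may appeal to \cite{o11}.
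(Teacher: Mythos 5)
Your first inequality is correct, and your overall strategy for the second one --- realize $M'$ as an elementary lift of $M$, so that quotient/lift theory gives both $r(M')\le r+1$ and the persistence of the flats of $M$, and then arrange for $Z$ to become a flat of $M'$ --- has exactly the right shape; it is the primal (dualized) formulation of what the paper actually does. The problem is that you stop at the one step that carries the entire content of the theorem. Saying you will choose ``a suitable modular cut \ldots\ designed to \ldots\ pull the elements of $\cl_M(Z)\setminus Z$ away from $Z$'' is a restatement of the goal, not a construction: you never specify which modular cut of $M^*$ (equivalently, which single-element coextension of $M$) to take, and you never verify that $Z$ is closed in the resulting $N\setminus e$. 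Since a minimum-rank matroid $M\in\mqual(\X)$ may be non-representable --- this is the whole reason the matroid minimum rank is a distinct invariant from the matrix one --- your representable computation does not cover the cases that matter, and the general case is precisely the ``technical heart'' that you defer rather than supply.

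For comparison, here is how the paper closes that gap. Pass to duals: a set $F$ is a flat of a matroid if and only if its complement is a cyclic set (a union of circuits) of the dual matroid (cf.\ \cite[Exercise 3.25]{gm12}), so the requirement ``$Z$ is a flat of $M'$'' becomes ``$T=\setoneto{n}\setminus Z$ is cyclic in $(M')^*$.'' If $T$ is not already cyclic in $M^*$, let $C$ be the maximal cyclic subset of $T$ in $M^*$; then $T-C$ is independent and nonempty, so the principal modular cut $\mathcal M$ of flats of $M^*$ containing $T-C$ is proper, and one forms the elementary quotient $(M^*+_{\mathcal M}e)/e$. The verification is where the machinery earns its keep: $(T-C)\cup\{e\}$ is a circuit of $M^*+_{\mathcal M}e$ by \cite[Theorem 7.2.3]{o11}, so $T-C$ is a circuit of the quotient, while $C$ remains cyclic there by \cite[Proposition 7.3.6(iv)]{o11}; hence $T$ is cyclic in the quotient, and dualizing produces exactly the elementary lift $M'$ your outline calls for, with the rank and flat-preservation properties following as you say from the standard theory. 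Without an explicit choice of modular cut and this verification (or an equivalent primal argument), your proposal is a correct reduction of the theorem to its hard step, not a proof of it.
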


\begin{proof}
    Let $n$ be the number of columns of $\X$. It is well known (see, e.g., \cite[Exercise 3.25]{gm12}) that, for a matroid $M$ on ground set $\setoneto{n}$, a set $F$ is a flat of $M$ if and only if $\setoneto{n}\setminus F$ is a cyclic set (union of circuits) of $M^*$. Hence, letting $\mathcal{R^*}(\X)$ denote the set of matroids such that the support of each row of $\X$ is a cyclic set of the matroid, we have $M\in \mqual(\X)$ if and only if $M^* \in \mqual^*(\X)$.
    
    Now let $k$ and $k'$ be the maximum rank among all matroids in $\mathcal{R^*}(\X)$ and in $\mathcal{R^*}(\X')$, respectively.  It suffices to show that $k\geq k'\geq k-1$.  The fact that $\mathcal{R^*}(\X')\subseteq\mathcal{R^*}(\X)$ immediately gives $k'\leq k$.  So, to complete the proof, we need only show $k-1 \leq k'$, for which it suffices to produce a matroid of rank at least $k-1$ in $\mqual^*(\X')$.
    
	Toward this end, let $M$ be a matroid in $\mathcal{R^*}(\X)$ with rank $k$. Let $T$ be the support of the row that was appended to $\X$ to produce $\X'$. If $T$ is cyclic in $M$, then $M\in\mathcal{R^*}(\X')$, in which case $M$ is the desired matroid
    
    On the other hand, if $T$ is not cyclic in $M$, let $C$ be the maximal subset of $T$ that is cyclic in $M$. Then $T-C$ is independent and nonempty.  In particular, $T-C$ cannot have rank $0$.  Therefore, letting $\mathcal{M}$ be the modular cut of $M$ consisting of all flats containing $T-C$, we have that $\mathcal M$ is not the set of all flats.  Hence, we may let $M'$ be the elementary quotient of $M$ with respect to $\mathcal{M}$, meaning (see \cite[Section 7.3]{o11}) that $M'=(M+_{\mathcal{M}}e)/e$, where $M+_{\mathcal{M}}e$ is the matroid obtained from $M$ by freely adding a point $e$ to the closure of $T-C$.  Then, by \cite[Theorem 7.2.3 and Proposition 3.1.6]{o11}, we have $r(M')=k-1$.
    
    Now, \cite[Proposition 7.3.6(iv)]{o11} implies that $C$ is cyclic in $M'$. Meanwhile, since $T-C$ is independent in $M$, it follows from \cite[Theorem 7.2.3]{o11} that $(T-C)\cup\{e\}$ is a circuit of $M+_{\mathcal{M}}e$, from which it follows that $T-C$ is a circuit in $M'$. So now $T$ is cyclic in $M'$. Hence, $M'\in\mathcal{R^*}(\X')$, and in this case $M'$ is the desired matroid.
\end{proof}

It is worth noting that 
questions about the matroids in $\mqual(\X)$ can be posed entirely apart from any discussion of matrices, since the statement that $M\in\mqual(\X)$ simply means that every one of some given collection of sets (namely the zero sets of the rows of $\X$) is a flat of $M$.  In particular, the matroid minimum rank problem is equivalent to the problem of determining how small the rank of a matroid on some fixed ground set can be, given only that certain subsets are flats.

Finally, we note a crucial inequality that follows immediately from \cref{thm:matrix in Q represents matroid in R}, namely that, over every field $\F$,
\begin{equation}\label{eqn:matroid min rank leq matrix min rank}
    \mr\mqual(\X) \le \mr\qual_{\F}(\X).
\end{equation}

\subsection{The triangle number lower bound}
\label{subsec:triangle number}

A combinatorial invariant of a zero-nonzero pattern that is of fundamental importance is its \defterm{triangle number}, which we now define.
\begin{definition}
    A square zero-nonzero pattern $\X$ is called a \defterm{triangle} if it is possible to permute the rows and columns of $\X$ (independently) to obtain a pattern that is lower triangular with only $\ast$ entries on its diagonal.  (Equivalently, the set system given by the supports of the rows of $\X$ has a unique transversal.)
\end{definition}

\begin{definition}
    Let $\X$ be a zero-nonzero pattern.  The \defterm{triangle number} of $\X$ is the largest $k$ such that some $k\times k$ submatrix of $\X$ is a triangle.  This value is denoted by $\tri(\X)$.
\end{definition}

An argument involving elementary linear algebra is sufficient to show that $\tri(\X)$ is a lower bound on the rank of every matrix with the pattern $\X$.  In fact, a stronger result holds in the generalized setting of the matroid minimum rank.  In particular, \cite[Lemma 24]{d20} states that the triangle number of $\X$ is the height of the lattice $L(\X)$.  Every set in $L(\X)$ is a flat of each matroid $M \in \mqual(\X)$, and it follows that $L(\X)$ is a subposet of $L(M)$.  Hence, the height of $L(M)$, which is $r(M)$, must be at least the height of $L(\X)$, namely $\tri(\X)$.  Therefore, we have $\tri(\X) \le \mr\mqual(\X)$.
Combining this with \eqref{eqn:matroid min rank leq matrix min rank} gives that, over every field $\F$,
\begin{equation}\label{eqn:fundamental triangle number inequality}
    \tri(\X) \le \mr\mqual(\X) \le \mr\qual_{\F}(\X).
\end{equation}

In the context of the matrix minimum rank problem, a key question is that of when the minimum rank of a pattern actually coincides with its triangle number.  Equivalently, understanding when there is a gap between the two, such that $\tri(\X) < \mr\qual_{\F}(\X)$, is an active area of investigation.  In light of \eqref{eqn:fundamental triangle number inequality}, it is therefore interesting to ask the analogous question for the matroid minimum rank.  That is, what conditions are necessary or sufficient to ensure or preclude that $\tri(\X) < \mr\mqual(\X)$ holds?  Many of our theorems that follow bear on this question, and specifically relate it to the existence of matroid adjoints.

\section{Transposes and adjoints}\label{sec:transposes and adjoints}

This section is the heart of the paper.  We introduce the notion of a matroid adjoint and relate it to the transpose of a matrix pattern, with the key observation that transposing the pattern has the effect of dualizing the associated intersection lattice.  
\cref{thm:One Big Theorem}, our main result, brings these ideas together to explain a gap between the triangle number and the matroid minimum rank in terms of matroid adjoints.

\subsection{The transpose and the minimum rank of a pattern}\label{subsec:transpose and min rank}

Letting $\X$ be the pattern of Example \ref{ex:Example 25 pattern},
we have, as noted in \cite[Example 25]{d20}, that $\mr\mqual(\X)\geq 5 > 4 = \tri(\X)$.
Given that the uniform matroid $U_{5,7}\in\mqual(\X)$, we in fact have $\mr\mqual(\X)=5$. Nevertheless, for the pattern
\begin{equation}\label{eqn:ex 25 transpose}
\X^T = \kbordermatrix{
& 1 & 2& 3& 4& 5& 6 & 7& 8\\
&* & * & * & * & * & 0 & 0 & 0 \\
&0 & 0 & * & * & * & * & 0 & 0 \\
&* & 0 & 0 & * & * & 0 & * & 0 \\
&0 & * & 0 & * & * & 0 & 0 & * \\
&* & 0 & * & 0 & 0 & * & * & 0 \\
&0 & * & * & 0 & 0 & * & 0 & * \\
&* & * & 0 & 0 & * & 0 & * & *
}
\end{equation}
we have $\mr\mqual(\X^T)=4$, so that $\X$ shows a discrepancy between its matroid minimum rank and that of its transpose.  One matroid in $\mqual(\X^T)$ realizing the minimum rank of $4$ is the V\'amos matroid.  In fact, with this matroid labeled as shown in Figure \ref{fig:Vamos}, the zero set of each row of $\X^T$ is not only a flat, but is actually a hyperplane.

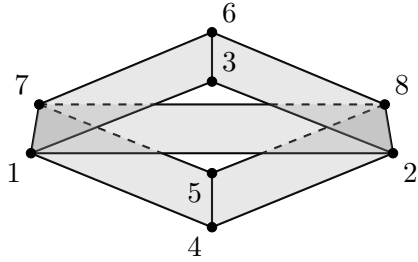
\begin{figure}
\begin{tikzpicture}
	\def \sf {0.0625};
	\def \vadjust {2};
	
	\coordinate (C1) at (41*\sf,14*\sf + 2.5*\sf);
	\coordinate (C2) at (41*\sf,3*\sf + \vadjust*\sf);
	\coordinate (D1) at (41*\sf,48.5*\sf - \vadjust*\sf);
	\coordinate (D2) at (41*\sf,39*\sf - 3*\sf);
	\coordinate (A1) at (4.25*\sf,31.125*\sf);
	\coordinate (A2) at (2.5*\sf,20.75*\sf);
	\coordinate (B1) at (77.75*\sf,31.125*\sf);
	\coordinate (B2) at (79.5*\sf,20.75*\sf);
	
	\coordinate (X1) at (intersection of A1--B1 and A2--D2);
	\coordinate (X2) at (intersection of A1--B1 and B2--D2);
	\coordinate (X3) at (intersection of A1--C1 and A2--D2);
	\coordinate (X4) at (intersection of B1--C1 and B2--D2);
	\coordinate (X5) at (intersection of A1--C1 and A2--B2);
	\coordinate (X6) at (intersection of B1--C1 and B2--A2);
	
	\draw[thick] (D1) -- (D2);
	\draw[thick] (C1) -- (C2);

	\draw[thick,dashed] (A1) -- (X1);
	\draw[thick]        (X1) -- (X2);
	\draw[thick,dashed] (X2) -- (B1);

	\draw[thick] (A2) -- (B2);

	\draw[thick] (A1) -- (A2);
	\draw[thick] (A1) -- (D1);
	\draw[thick] (A2) -- (D2);
	\draw[thick] (A2) -- (C2);
	\draw[dashed,thick] (A1) -- (X5);
	\draw[thick] (X5) -- (C1);

	\draw[thick] (B1) -- (B2);
	\draw[thick] (B1) -- (D1);
	\draw[thick] (B2) -- (D2);
	\draw[thick] (B2) -- (C2);
	\draw[dashed,thick] (B1) -- (X6);
	\draw[thick] (X6) -- (C1);

    \fill[gray!90,fill opacity=0.2] (A1) -- (B1) -- (B2) -- (A2) -- (A1);
    \fill[gray!90,fill opacity=0.2] (A1) -- (D1) -- (D2) -- (A2) -- (A1);
    \fill[gray!90,fill opacity=0.2] (A1) -- (C1) -- (C2) -- (A2) -- (A1);
    \fill[gray!90,fill opacity=0.2] (B1) -- (D1) -- (D2) -- (B2) -- (B1);
    \fill[gray!90,fill opacity=0.2] (B1) -- (C1) -- (C2) -- (B2) -- (B1);

	\foreach \s in {A1,A2,B1,B2,C1,C2,D1,D2}
		\draw[fill] (\s) circle (\sf);
	
    \draw (A1) node[above left] {$7$};
    \draw (A2) node[below left] {$1$};
    \draw (B1) node[above right] {$8$};
    \draw (B2) node[below right] {$2$};
    \draw (C1) node[below left] {$5$};
    \draw (C2) node[below left] {$4$};
    \draw (D1) node[above right] {$6$};
    \draw (D2) node[above right] {$3$};

\end{tikzpicture}
\caption{The V\'amos matroid}
\label{fig:Vamos}
\end{figure}

Given that the rows of $\X^T$ reflect a specific selection
of hyperplanes of the V\'amos matroid,
it is natural to ask whether any special property of that matroid accounts for the fact that the minimum rank of the pattern differs from that of its transpose.  We will see in this section how this is in fact the case.  Certainly the V\'amos matroid has some remarkable properties; it is a smallest matroid that is not representable over any field, for example.  The relevant property here, however, turns out to be its failure to have an adjoint.

\subsection{Matroid adjoints}
Recall that we write $L(M)$ to denote the lattice of flats of a matroid $M$.
The notion of a matroid adjoint represents an attempt to attach a matroid to the lattice-theoretic dual $L(M)^*$ of $L(M)$. (The dual lattice $L(M)^*$ is already the lattice of flats of some matroid $M'$ exactly when $M$ is a modular matroid. In that case, $M'$ is the unique adjoint of $M$. See Section 4, particularly Proposition 4.1 and Theorem 4.3, of \cite{ftw24}.)

\begin{definition}
\label{def:adjoint}
    Let $M$ be a matroid.  A simple matroid $M'$ is said to be an \defterm{adjoint} of $M$ if $r(M')=r(M)$ and there is a map $\phi:L(M)\rightarrow L(M')$ such that
    \begin{enumerate}
    \item $\phi$ is injective. 
    \item\label{prop:order-reversing} If $F_1,F_2\in L(M)$ and $F_1\subseteq F_2$, then $\phi(F_2)\subseteq \phi(F_1)$.
    \item The restriction of $\phi$ that maps hyperplanes of $M$ to points of $M'$ is bijective.
    \end{enumerate}
\end{definition}

In light of property \ref{prop:order-reversing} of \cref{def:adjoint}, the map $\phi$ is seen to be an order-embedding of $L(M)^*$ into the lattice of flats of $M'$.

The fact that not every matroid has an adjoint was first shown by Cheung, who established in \cite{Cheung} that the V\'amos matroid does not.

\subsection{Lattice-theoretic duality and the transpose}

The notion of the dual lattice (of the lattice of flats of a matroid) is elemental to the definition of the matroid adjoint.  So, to explain what the notion of an adjoint has to do with the relationship between the minimum rank of a pattern and that of its transpose, we begin with \cref{thm:transpose}, which shows that
taking the transpose of a pattern has the effect of dualizing its intersection lattice.
To prove this, we will need the definition below and the lemma that follows.

For the following definition, we recall \cref{def:zeroset}.

\begin{definition}
\label{def:neighborhood}
    Let $\X$ be a zero-nonzero pattern whose set of column labels is $C$, and let $A\subseteq C$. The \emph{neighborhood} $N(A)$ of $A$ is $\{t\in C: Z(A)\subseteq Z(t)\}$. Similarly, if $A\subseteq R$, then the \emph{neighborhood} $N(A)$ of $A$ is $\{t\in R: Z(A)\subseteq Z(t)\}$. For simplicity, we write $N(\{x\})$ as $N(x)$ for each $x\in C\cup R$.
\end{definition}

The neighborhood function $N$ can be thought of as a ``closure operator'' that adds to a set $A$ all rows or columns possible such that $A$ and $N(A)$ have the same zero set (see \cref{lem:zero sets lemma}\ref{property:Z of A is Z of N of A}). Since the term \emph{closure} has an established meaning in matroid theory, we have opted to use a different term. However, the term \emph{closure} might be just as good of a term as \emph{neighborhood}, because the notion of neighborhood we have defined generalizes the matroid-theoretic notion of closure, as we will see in Proposition \ref{prop:closure}.

\begin{lemma}\label{lem:zero sets lemma}
Let $\X$ be a zero-nonzero pattern 
whose set of row labels is $R$ and whose set of column labels is $C$, and suppose that either $A\subseteq R$ or $A\subseteq C$. Then:
\begin{enumerate}
    \item\label{property:Z of A is Z of N of A}
    $Z(N(A))=Z(A)$.
    \item\label{property:Z of Z of A is N of A}
    $Z(Z(A))=N(A)$.
\end{enumerate}
In addition, when $A \subseteq C$, we have:
\begin{enumerate}[resume]
    \item\label{property:A is least in LX containing A}
    $N(A)$ is the least element of $L(\X)$ that contains $A$.
    \item\label{property:Z of Z of A is A}
    If $A \in L(\X)$, then $Z(Z(A))=A$.
\end{enumerate}
\end{lemma}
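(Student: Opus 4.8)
The plan is to recognize everything here as the standard theory of the polarity (antitone Galois connection) induced by the symmetric ``zero relation'' between rows and columns, and then to supply the one genuinely combinatorial observation needed for the assertions about $L(\X)$. First I would record the relation $x \sim y$, meaning that the $(x,y)$-entry of $\X$ is $0$, between $R$ and $C$, and note that it is symmetric in the sense that for $x \in R$ and $y \in C$ we have $y \in Z(x)$ if and only if $x \in Z(y)$. Consequently, for any set $A$ of row labels or of column labels, $Z(A)$ is exactly the set of all labels on the opposite side that are $\sim$-related to every element of $A$. Two formal properties are then immediate: $Z$ is order-reversing, so that $A \subseteq B$ implies $Z(B) \subseteq Z(A)$, and $Z$ is extensive after doubling, meaning $A \subseteq Z(Z(A))$, since each $a \in A$ is $\sim$-related to every element of $Z(A)$. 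I would also unpack the definition of $N$ to find that, for $A \subseteq C$, we have $N(A) = \{t : Z(A) \subseteq Z(t)\} = \bigcap_{r \in Z(A)} Z(r) = Z(Z(A))$, and symmetrically for $A \subseteq R$. This identity $N = Z \circ Z$ is precisely statement \ref{property:Z of Z of A is N of A}, so that part requires nothing beyond this unpacking.

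With $N = Z \circ Z$ in hand, part \ref{property:Z of A is Z of N of A} becomes the classical identity $Z \circ Z \circ Z = Z$: from $A \subseteq Z(Z(A))$ and order-reversal we get $Z(Z(Z(A))) \subseteq Z(A)$, while applying extensivity to the set $Z(A)$ gives $Z(A) \subseteq Z(Z(Z(A)))$; hence $Z(N(A)) = Z(A)$.

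For part \ref{property:A is least in LX containing A}, the key extra point is that the expression $N(A) = \bigcap_{r \in Z(A)} Z(r)$ exhibits $N(A)$ as an intersection of zero sets of rows of $\X$ (with the empty-intersection convention $N(A) = C$ when $Z(A) = \emptyset$), so that $N(A) \in L(\X)$ directly from the definition of the intersection lattice $L(\X)$. Extensivity gives $A \subseteq N(A)$. For minimality I would take any $B \in L(\X)$ with $A \subseteq B$, write $B = \bigcap_{r \in S} Z(r)$ for some $S \subseteq R$, and observe that $A \subseteq Z(r)$ for each $r \in S$ forces, by symmetry of $\sim$, that $r \in Z(A)$, whence $S \subseteq Z(A)$ and therefore $N(A) = \bigcap_{r \in Z(A)} Z(r) \subseteq \bigcap_{r \in S} Z(r) = B$. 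Finally, part \ref{property:Z of Z of A is A} is immediate from the previous two: if $A \in L(\X)$, then $A$ is itself an element of $L(\X)$ containing $A$, so by part \ref{property:A is least in LX containing A} we get $N(A) \subseteq A$; combined with $A \subseteq N(A)$ this yields $Z(Z(A)) = N(A) = A$.

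I expect no serious obstacle, since the whole lemma is Galois-connection bookkeeping once the zero relation is seen to be symmetric. The one place demanding genuine care is part \ref{property:A is least in LX containing A}: one must identify $N(A)$ with the concrete intersection $\bigcap_{r \in Z(A)} Z(r)$ of generators of $L(\X)$, rather than treat it merely as an abstract closure, in order to obtain both membership in $L(\X)$ and minimality, and one must keep the empty-intersection conventions (and the tracking of which side, $R$ or $C$, each operator acts on) consistent throughout.
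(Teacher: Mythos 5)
Your proof is correct, and it rests on the same two pillars as the paper's: the symmetry of the zero relation (for $r\in R$ and $c\in C$, one has $c\in Z(r)$ iff $r\in Z(c)$) and the identification $N(A)=Z(Z(A))$. The differences are organizational. The paper proves (i) directly from the definitions and then proves (ii) by exactly your unpacking argument; you prove (ii) first and deduce (i) as the formal Galois-connection identity $Z\circ Z\circ Z=Z$, using extensivity $A\subseteq Z(Z(A))$ together with order-reversal. For (iii), the paper starts from the least element $\hat A$ of $L(\X)$ containing $A$ (implicitly assuming one exists), identifies it as the intersection of the zero sets of all rows whose zero sets contain $A$, and shows that this set of rows equals $Z(A)$, giving $\hat A=Z(Z(A))=N(A)$; you instead construct the least element outright, checking that $N(A)=\bigcap_{r\in Z(A)}Z(r)$ lies in $L(\X)$ (with the empty-intersection convention), contains $A$, and is contained in every $B=\bigcap_{r\in S}Z(r)\in L(\X)$ with $A\subseteq B$, since $A\subseteq Z(r)$ forces $r\in Z(A)$ and hence $S\subseteq Z(A)$. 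Your route buys a self-contained existence proof for the least element and exhibits (i) as pure order-theoretic bookkeeping, while the paper's is marginally shorter because (i) is immediate from the definitions anyway; part (iv) is handled identically in both.
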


\begin{proof}
    Claim \ref{property:Z of A is Z of N of A} follows immediately from \cref{def:zeroset,def:neighborhood}.

To prove \ref{property:Z of Z of A is N of A}, we assume for convenience that $A \subseteq C$; the case where $A \subseteq R$ follows similarly.  By definition, $c \in N(A)$ means that $Z(A)\subseteq Z(c)$, which in turn means that column $c$ has an entry of $0$ in every row that is in $Z(A)$.  But it is obviously equivalent to say that every row in $Z(A)$ has an entry of $0$ in column $c$, i.e., to say that $c \in \bigcap_{r \in Z(A)} Z(r) = Z(Z(A))$.

    Now, to prove \ref{property:A is least in LX containing A} and \ref{property:Z of Z of A is A}, we assume for the remainder of the proof that $A \subseteq C$.

For \ref{property:A is least in LX containing A}, let $\hat A$ be the least element of $L(\X)$ that contains $A$.
Recall that $L(\X)$ is generated by closing the collection of zero sets of rows of $\X$ under intersection.
Hence,
$\hat A$ is the intersection of the zero sets of all rows of $\X$ whose zero sets contain $A$.
Let $R'$ be the collection of all such rows.  Then, to restate the above, $\hat A = Z(R')$.

For each $r \in R$, we have $r \in Z(A)$ exactly when every column in $A$ has a $0$ entry in row $r$.  This is the same as saying that row $r$ has a $0$ entry in every column of $A$, which is clearly equivalent to saying that the zero set of row $r$ contains $A$, i.e., that $r \in R'$.  Hence, $R'=Z(A)$.

Now, by the above, and making use of \ref{property:Z of Z of A is N of A}, we have $\hat A = Z(R') = Z(Z(A)) = N(A)$.

    Finally, we have from \ref{property:Z of A is Z of N of A} that $Z(Z(A))=N(A)$.  If $A\in L(\X)$, then \ref{property:A is least in LX containing A} gives 
    $N(A)=A$. So this suffices to establish \ref{property:Z of Z of A is A}.
\end{proof}

As mentioned earlier, the closure operator
of a matroid $M$, denoted $\cl_M$, is generalized by the notion of neighborhood given in \cref{def:neighborhood}.  In particular, the two coincide for the fundamental pattern of $M$, as we now show.

\begin{proposition}
\label{prop:closure}
Let $\X$ be the fundamental pattern of a matroid $M$, and let $C$ be the set of column labels of $\X$, so that $E(M)=C$.  If $A\subseteq C$, then $N(A)=\textnormal{cl}_M(A)$.
\end{proposition}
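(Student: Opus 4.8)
The plan is to observe that this proposition follows almost immediately by chaining together two facts already established, once we notice that both $N(A)$ and $\cl_M(A)$ admit the \emph{same} characterization: each is the least element of a lattice of flats containing $A$. So the entire argument amounts to lining up these two descriptions and identifying the lattices involved.

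Concretely, I would proceed as follows. First, since the proposition assumes $A\subseteq C$, I can invoke \cref{lem:zero sets lemma}\ref{property:A is least in LX containing A} to conclude that $N(A)$ is the least element of $L(\X)$ that contains $A$. Next, because $\X$ is the fundamental pattern of $M$, \cref{lem:CHPIP generates lattice of flats} gives $L(\X)=L(M)$. Combining these, $N(A)$ is the least element of $L(M)$, i.e.\ the least flat of $M$ under inclusion, that contains $A$. It then remains only to recognize that this least flat is precisely the matroid closure $\cl_M(A)$: the intersection of all flats containing $A$ is again a flat (as intersections of flats are flats), and this intersection is exactly $\cl_M(A)$, the smallest flat containing $A$. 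Stringing the equalities together yields $N(A)=\cl_M(A)$.

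There is no substantive obstacle here; all of the real work has been done in the preceding lemmas, and in that sense this proposition is essentially a corollary. The only point requiring any care is the final identification of the least flat of $M$ containing $A$ with the closure $\cl_M(A)$, which is a standard property of the matroid closure operator; I would simply cite the appropriate statement in \cite{o11} rather than reprove it. (One should also confirm the underlying bookkeeping that the zero sets of the rows of $\X$ are exactly the hyperplanes of $M$, but this is already implicit in \cref{def:fundamental} and is used in the proof of \cref{lem:CHPIP generates lattice of flats}, so it need not be revisited.)
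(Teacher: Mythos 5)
Your proof is correct and is essentially identical to the paper's own: both arguments combine \cref{lem:zero sets lemma}\ref{property:A is least in LX containing A} (that $N(A)$ is the least element of $L(\X)$ containing $A$) with \cref{lem:CHPIP generates lattice of flats} (that $L(\X)=L(M)$) and the standard fact that $\cl_M(A)$ is the smallest flat of $M$ containing $A$. No differences worth noting.
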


\begin{proof}
    It is well known that the closure of a set $A$ in a matroid $M$ is the smallest flat, i.e., the smallest set in $L(M)$, containing $A$.
Hence, given that $L(M)=L(\X)$ by \cref{lem:CHPIP generates lattice of flats}, the result follows from \cref{lem:zero sets lemma}\ref{property:A is least in LX containing A}.
\end{proof}

With \cref{lem:zero sets lemma} in hand, we can now prove the following result.

\begin{theorem}
\label{thm:transpose}
Let $\X$ be a zero-nonzero pattern.
Then $ L(\X)^*\simeq L(\X^T)$, with an isomorphism $\phi:L(\X)^*\rightarrow L(\X^T)$ given by $\phi(A)=Z(A)$.
\end{theorem}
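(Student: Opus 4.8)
The plan is to exhibit $\phi(A)=Z(A)$ as an inclusion-reversing bijection from $L(\X)$ onto $L(\X^T)$ whose inverse is also inclusion-reversing. This is precisely what it means for $\phi$ to be an isomorphism from the dual lattice $L(\X)^*$ onto $L(\X^T)$, since a bijection between lattices that is order-preserving with order-preserving inverse automatically preserves all meets and joins.

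First I would fix notation and record the symmetry between $\X$ and $\X^T$. Writing $C$ for the column labels and $R$ for the row labels of $\X$, the rows of $\X^T$ are indexed by $C$ and its columns by $R$. The key point is that for each $c\in C$ the zero set of column $c$ of $\X$ coincides with the zero set of the corresponding row of $\X^T$: both are the subset of $R$ on which that column (equivalently, that transposed row) is zero. Consequently the generators of $L(\X^T)$ are exactly the sets $Z(c)$ with $c\in C$, and the operation $Z$ is literally the same whether read in $\X$ or in $\X^T$. This is what allows me to apply \cref{lem:zero sets lemma} to $\X^T$ by simply interchanging the roles of $R$ and $C$.

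With that correspondence in hand, the three things to verify are routine. For well-definedness, given $A\in L(\X)$ (so $A\subseteq C$), I have $\phi(A)=Z(A)=\bigcap_{a\in A}Z(a)$, an intersection of generators of $L(\X^T)$ (the empty intersection giving $R$), so $\phi(A)\in L(\X^T)$; the symmetric statement shows $\psi(B):=Z(B)\in L(\X)$ for each $B\in L(\X^T)$. For the order behavior, $A_1\subseteq A_2$ forces $Z(A_2)\subseteq Z(A_1)$, so both $\phi$ and $\psi$ reverse inclusion. For bijectivity I would invoke \cref{lem:zero sets lemma}\ref{property:Z of Z of A is A}: applied to $\X$ it gives $\psi(\phi(A))=Z(Z(A))=A$ for every $A\in L(\X)$, and applied to $\X^T$ it gives $\phi(\psi(B))=Z(Z(B))=B$ for every $B\in L(\X^T)$. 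Thus $\phi$ and $\psi$ are mutually inverse inclusion-reversing maps, which completes the argument.

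The only real obstacle is bookkeeping: one must be scrupulous that ``$Z$'' denotes the same operation on both sides, and that the statements of \cref{lem:zero sets lemma}, phrased for the column labels $C$ of a single pattern, are transported correctly to $\X^T$ (where the role of $C$ is played by $R$). Once that identification is made explicit, the entire substance of the proof lives in part \ref{property:Z of Z of A is A} of the lemma, which supplies both composition identities. I expect no difficulty beyond confirming that $Z(A)$ genuinely lands in $L(\X^T)$, and this is immediate from the description of $L(\X^T)$ as the intersection closure of the zero sets of the columns of $\X$.
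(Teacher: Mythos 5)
Your proposal is correct and follows essentially the same route as the paper's proof: both arguments rest entirely on \cref{lem:zero sets lemma}\ref{property:Z of Z of A is A} applied once to $\X$ and once to $\X^T$ (with the observation that $Z$ denotes the same operation read in either pattern), plus the elementary fact that $Z$ reverses inclusions. The only difference is packaging: you exhibit $Z$ on $L(\X^T)$ as an explicit two-sided inverse, while the paper verifies well-definedness, surjectivity, injectivity, and the order-reversal equivalence separately --- the underlying computations, namely $Z(Z(A))=A$ on both sides, are identical.
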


\begin{proof}
First, we show that $\phi$ is well-defined. Suppose $A\in L(\X)^*$. Then, since dual lattices are posets on the same set, $A\in L(\X)$. So $A$ is a set of columns of $\X$, and thus a set of rows of $\X^T$. Hence, $Z(A)$ is the intersection of the zero sets of some set of rows of $\X^T$. Therefore, $Z(A)\in L(\X^T)$, and $\phi$ is well-defined.

Now we show the surjectivity of $\phi$. Let $T\in L(\X^T)$.  Then $T$ is a set of columns of $\X^T$ and therefore a set of rows of $\X$. Because $L(\X)$ is the intersection lattice of the zero sets of the rows of $\X$, there is a set $A\in L(\X)$ such that $A=Z(T)$. Therefore, $Z(A)=Z(Z(T))=T$ by \cref{lem:zero sets lemma}\ref{property:Z of Z of A is A}, with $\X^T$ playing the role of $\X$. Since $A\in L(\X)^*$ and $Z(A)=T$, the surjectivity of $\phi$ follows.

To see that $\phi$ is injective, let $A,B\in L(\X)^*$. Then $A,B\in L(\X)$. If $\phi(A)=\phi(B)$, then $Z(A)=Z(B)$. Thus, $Z(Z(A))=Z(Z(B))$. By \cref{lem:zero sets lemma}\ref{property:Z of Z of A is A}, we have $A=B$.

Finally, we claim that, if $A,B\in L(\X)^*$, then $A\subseteq B$ (meaning that $B\leq A$ in $L(\X)^*$) if and only if $Z(B)\subseteq Z(A)$. It is clear that if $A\subseteq B$, then $Z(B)\subseteq Z(A)$. Similarly, if $Z(B)\subseteq Z(A)$, then $Z(Z(A))\subseteq Z(Z(B))$. Then, since $A,B\in L(\X)$, we have by \cref{lem:zero sets lemma}\ref{property:Z of Z of A is A} that $A\subseteq B$.

Therefore, $\phi$ is an isomorphism of lattices.
\end{proof}

\subsection{Minimum rank and adjoints}
Our next goal is to show how the presence of a gap between the matroid minimum rank of a pattern $\X$ and the triangle number of $\X$ is related to the nonexistence of adjoints for the matroids associated with $\X^T$, i.e., those in $\mqual(\X^T)$.
Toward that goal, we now prove several results leading to our main result of \cref{thm:One Big Theorem}.

\begin{theorem}\label{thm:adjoint and transpose of hyperplane-point pattern}
Let $\X$ be the fundamental pattern of a matroid $M$. Then a matroid $M'$ is an adjoint of $M$ if and only if $M'\in\mqual(\X^T)$ and $r(M')=r(M)$. 
\end{theorem}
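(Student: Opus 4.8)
The plan is to prove both implications by comparing the abstract adjoint map of \cref{def:adjoint} with the concrete anti-isomorphism furnished by \cref{thm:transpose}. By \cref{lem:CHPIP generates lattice of flats} we have $L(\X)=L(M)$, so \cref{thm:transpose} gives an order-reversing bijection $\phi_0\colon L(M)\to L(\X^T)$ defined by $\phi_0(F)=Z(F)$. Unwinding the definition of the fundamental pattern, $Z(F)=\{H : F\subseteq H\}$ is exactly the set of hyperplanes of $M$ containing $F$; in particular $\phi_0$ carries each hyperplane $H$ to the singleton $\{H\}$ and carries $E(M)$ to $\emptyset$. I regard the ground set of $M'$ as the set of hyperplanes of $M$ (the columns of $\X^T$), which in the reverse implication means identifying each point of $M'$ with the hyperplane sent to it by the bijection in property (iii); this is the bookkeeping that makes the membership ``$M'\in\mqual(\X^T)$'' meaningful, and it ensures that, once $M'$ is simple, a point of $M'$ is literally a singleton $\{H\}$.

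For the forward implication, suppose $M'\in\mqual(\X^T)$ with $r(M')=r(M)$. Because every element of $L(\X^T)$ is an intersection of zero sets of rows of $\X^T$, and intersections of flats are flats, we get $L(\X^T)\subseteq L(M')$. Now $\emptyset=\phi_0(E(M))$ and every singleton $\{H\}=\phi_0(H)$ lie in $L(\X^T)$, hence are flats of $M'$; the first forces $M'$ to be loopless and the second forces it to have no parallel pairs, so $M'$ is simple and its points are exactly the singletons $\{H\}$. I then take $\phi_0$ itself as the map required by \cref{def:adjoint}: it is injective and order-reversing by \cref{thm:transpose}, its image lies in $L(M')$, and its restriction to hyperplanes, $H\mapsto\{H\}$, is a bijection onto the points of $M'$. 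Thus $M'$ is an adjoint of $M$.

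For the reverse implication, $r(M')=r(M)$ is immediate from the definition, so it remains to show $M'\in\mqual(\X^T)$, and since $Z(F)=\bigcap_{e\in F}Z(\cl_M(e))$ it is enough to prove that every set $Z(F)$ is a flat of $M'$. I will show that the adjoint map $\phi$ coincides with $\phi_0$. One inclusion, $Z(F)\subseteq\phi(F)$, is immediate: if $F\subseteq H$ then order-reversal gives $\{H\}=\phi(H)\subseteq\phi(F)$, so $H\in\phi(F)$. For the other inclusion, take any $H\in\phi(F)$; since $\phi(F)$ is a flat containing the point $\{H\}=\phi(H)$, we have $\phi(H)\subseteq\phi(F)$, and the order-\emph{reflecting} half of the order-embedding property recorded in the remark following \cref{def:adjoint} then yields $F\subseteq H$, i.e.\ $H\in Z(F)$. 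Hence $\phi=\phi_0$, and therefore $L(\X^T)=\phi_0(L(M))=\phi(L(M))\subseteq L(M')$, giving $M'\in\mqual(\X^T)$.

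The step I expect to be the main obstacle is precisely this second inclusion: that no ``extra'' hyperplane $H$ with $F\not\subseteq H$ can belong to $\phi(F)$. It is exactly the order-reflecting property---that $\phi(G)\subseteq\phi(F)$ forces $F\subseteq G$---and this does \emph{not} follow from injectivity and order-reversal alone, so the whole reverse implication rests on having that property available. I would lean on the order-embedding assertion in the remark to supply it; should one wish to argue from the axioms of \cref{def:adjoint} directly, the route I would take is to first establish that the rank of $\phi(F)$ in $M'$ equals $r(M)$ minus the rank of $F$, by noting that a maximal chain of flats of $M$ is carried by the injective, order-reversing $\phi$ to a strictly descending chain of $r(M)+1$ flats of $M'$, which must be maximal since $r(M')=r(M)$, and then to promote this to order-reflection using the rigidity of the hyperplane--point bijection (the rank-$3$ case being the statement that two lines of $M'$ meeting in two common points must coincide).
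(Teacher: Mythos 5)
Your proposal is correct and takes essentially the same approach as the paper: one direction verifies the adjoint axioms for the concrete map $F\mapsto Z(F)$ supplied by \cref{thm:transpose} together with \cref{lem:CHPIP generates lattice of flats}, and the other direction shows the abstract adjoint map must coincide with $F\mapsto Z(F)$, which is precisely the paper's claim that the zero set of the row of $\X^T$ indexed by a point $p_i$ is the flat $\phi(p_i)$. The crux you flag---that $\phi(H)\subseteq\phi(F)\Rightarrow F\subseteq H$ needs order-reflection rather than just injectivity and order-reversal---is treated no more rigorously in the paper itself, whose proof asserts this biconditional ``due to the inclusion-reversing property,'' i.e., by tacitly invoking the order-embedding remark following \cref{def:adjoint} exactly as you do explicitly.
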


\begin{proof}
  First, suppose that $M'$ is an adjoint of $M$. By \cref{def:adjoint}, the ranks of $M$ and $M'$ are equal, and there is an injective, inclusion-reversing map $\phi:L(M)\rightarrow L(M')$ that maps the hyperplanes of $M$ bijectively onto the points of $M'$.

  Let $\{p_1,p_2,\dots,p_n\}$ be the set of points (rank-$1$ flats) of $M$, and let $\{H_1,H_2,\dots, H_m\}$ be the set of hyperplanes of $M$. Then, as noted above, $\phi(H_1),\phi(H_2),\dots, \phi(H_m)$ are the points of $M'$.  Now, $M'\in\mqual(\X^T)$ means that each row of $\X^T$ is the complement of the incidence pattern of some flat of $M'$. 
  We claim that, in particular, the zero set of the $i$th row of $\X^T$ is the flat $\phi(p_i)$ of $M'$.
  For this, it suffices to show that each $(i,j)$-entry of $\X^T$ is $0$ if and only if $\phi(H_j)\subseteq\phi(p_i)$.

To this end, note that the $(i,j)$-entry of $\X^T$ is $0$ if and only if the $(j,i)$-entry of $\X$ is $0$. Since $\X$ is the fundamental pattern of $M$, the $(j,i)$-entry of $\X$ is $0$ if and only if $p_i\subseteq H_j$, which, due to the inclusion-reversing property of $\phi$, holds if and only if $\phi (H_j)\subseteq\phi(p_i)$. Thus, we have that $M'\in\mqual(\X^T)$ and $r(M')=r(M)$.

Conversely, suppose $M'\in\mqual(\X^T)$ and $r(M')=r(M)$. Then $L(\X^T)\subseteq L(M')$. By \cref{thm:transpose}, we have $L(\X^T)\simeq L(\X)^*$, while $L(M)=L(\X)$ by \cref{lem:CHPIP generates lattice of flats}. Thus, we have an inclusion-reversing bijective map from $L(M)$ to $L(\X)^*$,
an inclusion-preserving lattice isomorphism from $L(\X)^*$ to $L(\X^T)$, and an inclusion-preserving injective map from $L(\X^T)$ to $L(M')$. Thus, there is an inclusion-reversing injective map from $L(M)$ to $L(M')$. Since this map involves transposing the fundamental pattern of $M$, we are guaranteed to map the hyperplanes of $M$ bijectively onto the points of $M'$.

Also, note that every row of $\X$ is nonzero and no pair of rows of $\X$ are equal. Then $M'$ is simple, since $M'\in\mqual(\X^T)$.
Thus, $M'$ is an adjoint of $M$.
\end{proof}

\begin{lemma}
    \label{lem:extend}
    Let $\X$ be a zero-nonzero pattern and $\mathcal{Y}$ be obtained by appending a column to $\X$ whose zero set is the intersection of the zero sets of some subset $A$ of the columns of $\X$.
    Suppose that $M\in\mqual(\X)$ and that $M'$ is obtained from $M$ by freely adding a point to the closure of $A$.  Then, with $E(M')$ ordered such that this new point corresponds to the new column, $M'\in\mqual(\mathcal{Y})$.
\end{lemma}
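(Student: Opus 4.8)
The plan is to track how the zero set of each row changes when the new column is appended, and then to invoke the closure operator of a single-element extension to certify that each resulting zero set is a flat of $M'$. First I would fix notation: let $[n]$ be the column set of $\X$ (the ground set of $M$), let $e$ denote the appended column, identified with the freely added point as in the statement, and set $F=\cl_M(A)$. The phrase ``freely adding a point to the closure of $A$'' means $M'=M+_{\mathcal M}e$, where $\mathcal M$ is the principal modular cut consisting of all flats of $M$ containing $F$, which (since these are flats) is the same as the collection of all flats containing $A$. This is exactly the construction used in the proof of \cref{thm:append_row}, so I would cite \cite[Section 7.2]{o11} for its basic properties.

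Next I would compute, for an arbitrary row $r$, its zero set in $\mathcal{Y}$ in terms of its zero set in $\X$; writing $Z_{\X}$ and $Z_{\mathcal Y}$ to distinguish the two patterns. Since appending a column leaves the old entries unchanged, a column $c\in[n]$ lies in $Z_{\mathcal Y}(r)$ exactly when $c\in Z_{\X}(r)$, while $e\in Z_{\mathcal Y}(r)$ exactly when the new column has a $0$ in row $r$, i.e.\ exactly when $r$ lies in the zero set $Z(A)$ prescribed for that column. Unwinding \cref{def:zeroset}, $r\in Z(A)$ is equivalent to $A\subseteq Z_{\X}(r)$. Hence
\[
Z_{\mathcal Y}(r)=
\begin{cases}
Z_{\X}(r)\cup\{e\}, & \text{if } A\subseteq Z_{\X}(r),\\
Z_{\X}(r), & \text{if } A\not\subseteq Z_{\X}(r).
\end{cases}
\]
Because $M\in\mqual(\X)$, the set $G:=Z_{\X}(r)$ is a flat of $M$; and since $G$ is a flat, $A\subseteq G$ holds if and only if $F=\cl_M(A)\subseteq G$, that is, if and only if $G\in\mathcal M$. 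So the dichotomy above is precisely the dichotomy between $G\in\mathcal M$ and $G\notin\mathcal M$.

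It then remains only to confirm that in each case the displayed set is a flat of $M'$. The rank formula for the extension \cite[Section 7.2]{o11} gives, for a flat $G$ of $M$, that $e\in\cl_{M'}(G)$ if and only if $G\in\mathcal M$, and that no element of $[n]\setminus G$ is added in passing from $\cl_M$ to $\cl_{M'}$. Thus $\cl_{M'}(G)=G$ when $G\notin\mathcal M$ and $\cl_{M'}(G)=G\cup\{e\}$ when $G\in\mathcal M$. In either case $Z_{\mathcal Y}(r)=\cl_{M'}(G)$ is a flat of $M'$, and since $r$ was arbitrary this is exactly the condition $M'\in\mqual(\mathcal{Y})$.

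The argument is mostly a matter of unwinding definitions; the one place to be careful is the equivalence ``$A\subseteq Z_{\X}(r)\iff Z_{\X}(r)\in\mathcal M$,'' which relies essentially on $Z_{\X}(r)$ being a flat (guaranteed by $M\in\mqual(\X)$), so that containing $A$ upgrades to containing $F=\cl_M(A)$. I expect the main obstacle to be purely bookkeeping: citing the correct closure/rank description of the principal extension and making sure its two families of flats line up with the two branches of the displayed formula. There is no deeper difficulty.
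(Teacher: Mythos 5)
Your proposal is correct and follows essentially the same route as the paper's proof: identify $M'$ as the single-element extension $M+_{\mathcal M}e$ for the modular cut $\mathcal M$ of flats containing $A$, show that the zero set of each row of $\mathcal Y$ is $T\cup\{e\}$ or $T$ according as the flat $T=Z_{\X}(r)$ lies in $\mathcal M$ or not, and then invoke the standard description of flats of a single-element extension (the paper cites Oxley's Corollary 7.2.5 directly, where you rederive the same facts from the rank formula).
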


\begin{proof}
Label by $e$ the column appended to $\X$ to obtain $\mathcal{Y}$.
Recall (see \cite[Section 7.2]{o11}) that  
$M'$ is the single-element extension corresponding to $\mathcal M$,
where $\mathcal M$ is the modular cut of $M$ consisting of exactly those flats that contain $A$.  In other words, $M' = M+_{\mathcal{M}}e$.

Consider an arbitrary row $r$ of $\Y$.  Let $T$ be the zero set of the corresponding row of $\X$.  Then $T$ is a flat of $M$, since $M\in\mqual(\X)$.  Thus, $T \in \mathcal M$ if and only if $A \subseteq T$.  But $A \subseteq T$ means that row $r$ of $\X$ has a $0$ entry in every column of $A$, i.e., that $r$ is in the intersection of the zero sets of the columns of $A$.  And, by construction, this holds if and only if the zero set of row $r$ of $\Y$ is $T\cup e$.
Hence, we have shown that $T \in \mathcal M$ if and only if the zero set of row $r$ of $\Y$ is $T\cup e$.

We wish to show that $M' \in \mqual(\Y)$.  Since $r$ was chosen arbitrarily, it suffices to show that the flats of $M'$ include the zero set of row $r$ of $\Y$, which is either $T$ or $T\cup e$.  If it is $T$, then, by the above, $T\not\in \mathcal M$, so that, by \cite[Corollary 7.2.5(i)]{o11}, the set $T$ is a flat of $M'$.  If it is $T\cup e$, then $T\in\mathcal M$, so that, by \cite[Corollary 7.2.5(ii)]{o11}, the set $T\cup e$ is a flat of $M'$.
\end{proof}

Note that, in \cref{lem:extend}, the set $A$, and therefore the matroid $M'$,  is not uniquely determined by $\Y$.
In particular, if $A'\neq A$ but $Z(A')=Z(A)$, then freely adding a point to the closure of $A'$ and freely adding a point to the closure of $A$ will result in matroids in $\mqual(\mathcal{Y})$ that need not be equal.

Suppose a zero-nonzero pattern is augmented by appending a new row whose zero set is the intersection of the zero sets of some existing rows.  Since the collection of flats of a matroid is closed under intersection, this has no effect on the class of matroids associated with the pattern.  The same is not true of appending a column in the same fashion, since this increases the size of the ground set of the associated matroids.  Even so, our next result shows that there is still no effect on the matroid minimum rank.

\begin{theorem}
\label{thm:extend}
    Let $\X$ be a zero-nonzero pattern and let $\X'$ be obtained by appending a column to $\X$ whose zero set is the intersection of the zero sets of some subset of the columns of $\X$.  Then $\mr\mqual(\X')=\mr\mqual(\X)$.
\end{theorem}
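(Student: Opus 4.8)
The plan is to prove the two inequalities $\mr\mqual(\X') \le \mr\mqual(\X)$ and $\mr\mqual(\X') \ge \mr\mqual(\X)$ separately. For the first I would build up from a minimizer in $\mqual(\X)$ using the free addition already analyzed in \cref{lem:extend}; for the second I would tear down from a minimizer in $\mqual(\X')$ by deleting the new element. Let $A$ denote the subset of columns of $\X$ whose zero sets intersect to the zero set of the appended column, and let $e$ be the ground-set element corresponding to that column.

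For the upper bound, I would take $M \in \mqual(\X)$ of minimum rank and apply \cref{lem:extend} with $\Y = \X'$: freely adding a point to $\cl_M(A)$ yields a matroid $M'$ with $M' \in \mqual(\X')$. It then remains only to observe that this free addition preserves rank. Since the associated modular cut consists of all flats of $M$ containing $A$, it contains the top flat $E(M)$ and so is nonempty; the standard rank formula for single-element extensions \cite{o11} then gives $r(M') = r(M)$. Hence $\mr\mqual(\X') \le r(M') = r(M) = \mr\mqual(\X)$.

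For the lower bound, I would start with a matroid $M' \in \mqual(\X')$ of minimum rank and consider the deletion $N = M' \setminus e$. The crux is to show $N \in \mqual(\X)$. To do this I would use the fact that the flats of $N$ are exactly the sets $F \setminus e$ as $F$ ranges over the flats of $M'$, which follows from the identity $\cl_N(X) = \cl_{M'}(X) \setminus e$ for $X \subseteq E(M') \setminus e$. Now fix a row $r$ of $\X$ with zero set $T$. By the construction of $\X'$, the corresponding row of $\X'$ has zero set either $T$ or $T \cup e$, according to whether $A \not\subseteq T$ or $A \subseteq T$; in either case this set is a flat $F$ of $M'$ with $F \setminus e = T$. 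Thus $T$ is a flat of $N$, and since $r$ was arbitrary, $N \in \mqual(\X)$. Because deletion never increases rank, $\mr\mqual(\X) \le r(N) \le r(M') = \mr\mqual(\X')$, which closes the argument.

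I expect the main obstacle to lie in the lower bound, specifically in verifying that $N = M' \setminus e \in \mqual(\X)$: one must correctly characterize the flats of a single-element deletion and then dispatch both possible forms, $T$ and $T \cup e$, of the zero set of each row of $\X'$. The upper bound, by contrast, is essentially immediate from \cref{lem:extend} once the rank-preservation of the free addition is noted, so little additional work should be required there.
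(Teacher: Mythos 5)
Your proposal is correct and takes essentially the same approach as the paper: the upper bound comes from \cref{lem:extend} together with the observation that the free single-element extension (whose modular cut, containing $E(M)$, is nonempty) preserves rank, and the lower bound comes from deleting $e$ from a minimum-rank matroid in $\mqual(\X')$ and checking that the zero set of every row of $\X$ is a flat of the deletion. The only cosmetic difference is that where the paper cites \cite[Proposition 3.3.7(ii)]{o11} for the flats of a deletion, you derive the needed fact directly from the closure identity $\cl_N(X)=\cl_{M'}(X)\setminus e$.
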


\begin{proof}
Since adding a point freely to any flat of a matroid produces a new matroid with the same rank, \cref{lem:extend} shows that $\mr\mqual(\X') \le \mr\mqual(\X)$.  To prove the reverse inequality, let $M'$ be a matroid of minimum rank in $\mqual(\X')$, with an element $e$ corresponding to the column that was appended to $\X$ to produce $\X'$.  Let $M=M'\setminus e$.  Since $M'\in\mqual(\X')$, the zero set of each row of $\X'$ is a flat of $M'$.  Hence, by \cite[Proposition 3.3.7(ii)]{o11}, each row of $\X$ is a flat of $M$, so that $M\in\mqual(\X)$.  And $r(M) \le r(M')$.  Thus, $\mr\mqual(\X) \le r(M) \le r(M')=\mr\mqual(\X')$, as desired.
\end{proof}

\begin{lemma}
    \label{lem:min rank of transpose}
    Let $M$ be a matroid of rank $k$ in $\mqual(\X)$. If $M$ has an adjoint, then $\mr\mqual(\X^T)\leq k$.
\end{lemma}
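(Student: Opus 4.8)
The plan is to establish the chain
\[
\mr\mqual(\X^T)\le\mr\mqual(\mathcal P')=\mr\mqual(\mathcal P)=\mr\mqual(\Y^T)\le k,
\]
where $\Y$ denotes the fundamental pattern of $M$ and $\mathcal P,\mathcal P'$ are two auxiliary patterns built from the hyperplanes of $M$. The top inequality is where the adjoint enters: since $M$ has an adjoint $M'$, \cref{thm:adjoint and transpose of hyperplane-point pattern}, applied with $\Y$ in the role of the fundamental pattern, gives $M'\in\mqual(\Y^T)$ with $r(M')=r(M)=k$, so that $\mr\mqual(\Y^T)\le k$. The remaining links are pattern manipulations that transport this bound from $\Y^T$ to $\X^T$ without increasing the matroid minimum rank.

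First I would pass from $\Y^T$ to a pattern $\mathcal P$ indexed by the elements of $M$ rather than its points. Recall that $\Y^T$ has a row for each point of $M$ and a column for each hyperplane, with the $(P,H)$-entry equal to $0$ exactly when $P\subseteq H$. Let $\mathcal P$ have the same columns (the hyperplanes of $M$) but one row for each element $j\in[n]=E(M)$, with the $(j,H)$-entry equal to $0$ exactly when $j\in H$. For a non-loop $j$ lying on the point $P$, we have $j\in H$ if and only if $P\subseteq H$, so row $j$ of $\mathcal P$ carries the same constraint as row $P$ of $\Y^T$; parallel elements merely repeat a row, and a loop (which lies on every hyperplane) contributes an all-zero row, i.e.\ the trivial constraint that the whole ground set is a flat. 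Since $\mqual$ depends only on the ground set together with the set of zero sets of the rows, and since repeating a row or adjoining an all-zero row changes neither, I get $\mqual(\mathcal P)=\mqual(\Y^T)$ and hence $\mr\mqual(\mathcal P)=\mr\mqual(\Y^T)$.

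Next I would bridge $\mathcal P$ to $\X^T$. Note that column $i$ of $\X^T$ (that is, row $i$ of $\X$) has zero set exactly $F_i$, the zero set of row $i$ of $\X$, which is a flat of $M$ because $M\in\mqual(\X)$. Form $\mathcal P'$ from $\mathcal P$ by appending, for each row $i$ of $\X$, a column whose zero set is $F_i$. Because every flat is the intersection of the hyperplanes containing it \cite[Proposition 1.7.8]{o11}, each such $F_i$ is the intersection of the zero sets of a subset of the hyperplane-columns already present in $\mathcal P$ (with the full set $[n]$ arising as the empty intersection when $F_i=[n]$). Hence \cref{thm:extend}, applied once per appended column, yields $\mr\mqual(\mathcal P')=\mr\mqual(\mathcal P)$. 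Finally, $\X^T$ is exactly $\mathcal P'$ with its hyperplane-columns deleted: given any $N\in\mqual(\mathcal P')$, restricting $N$ to the appended columns produces a matroid $N'$ on the ground set of $\X^T$, and for each $j$ the zero set $\{F_i:j\in F_i\}$ of row $j$ of $\X^T$ is the intersection with this ground set of the flat $\{F\in E(N):j\in F\}$ of $N$, hence a flat of $N'$. Thus $N'\in\mqual(\X^T)$ with $r(N')\le r(N)$, giving $\mr\mqual(\X^T)\le\mr\mqual(\mathcal P')$ and completing the chain.

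The conceptual work is in choosing the intermediate patterns; once they are in place, each step is an application of a tool already established. The step I expect to require the most care is the identification $\mqual(\mathcal P)=\mqual(\Y^T)$, where one must correctly account for the passage from points to elements---specifically the roles of parallel elements (duplicate rows) and loops (all-zero rows)---and confirm that neither alters the associated class of matroids. A secondary point needing attention is the degenerate case $F_i=[n]$ in the application of \cref{thm:extend}, which is handled by reading $[n]$ as the empty intersection of columns.
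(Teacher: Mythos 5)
Your proposal is correct and takes essentially the same route as the paper's own proof: both use \cref{thm:adjoint and transpose of hyperplane-point pattern} to place the adjoint in the matroid class of the transposed fundamental pattern, then repeatedly apply \cref{thm:extend} to append columns whose zero sets are the flats arising as zero sets of rows of $\X$, and finally delete columns (and the corresponding matroid elements) to land in $\mqual(\X^T)$ without increasing rank. The only differences are cosmetic: you append just the columns needed for the rows of $\X$ where the paper appends all intersection columns at once, and your point-versus-element bookkeeping (loops and parallel elements) is rendered unnecessary by the paper's convention that the fundamental pattern's columns are indexed by the ground set.
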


\begin{proof}
    Let $M'$ be an adjoint of $M$. By definition of adjoint, $r(M')=k$. Take $\mathcal{Y}$ such that $\mathcal{Y}^T$ is the fundamental pattern of $M$. 
    Then $M\in\mqual(\Y^T)$, and by \cref{thm:adjoint and transpose of hyperplane-point pattern}, we have $M'\in\mqual(\Y)$.
    
    Now let $\intcl{\Y}$ denote the pattern obtained from $\Y$ by appending all columns (not already contained in $\Y$) whose zero sets are intersections of the zero sets of columns of $\Y$. 
By repeated use of \cref{thm:extend}, we have $\mr(\intcl{\Y})=\mr(\Y)$. Since $M'\in\mqual(\Y)$ and $r(M')=k$, we have $\mr(\intcl{\Y})=\mr(\Y)\leq k$. Therefore, there is a matroid $M''\in\mqual(\intcl{\Y})$ such that $r(M'')\leq k$. Recall that the flats of a matroid are exactly the intersections of hyperplanes of the matroid. Therefore, a set is a flat of $M$ if and only if it is the zero set of a row of $(\intcl{\Y})^T$. Thus, since $M\in\mqual(\X)$, we have that $\X$ is a row-deleted submatrix of $(\intcl{\Y})^T$. Therefore, $\X^T$ is a column-deleted submatrix of $\intcl{\Y}$. By deleting the elements of $M''$ corresponding to the columns of $\intcl{\Y}$ that are not columns of $\X^T$, we obtain a matroid in $\mqual(\X^T)$ with rank at most $k$. Therefore, $\mr\mqual(\X^T)\leq k$.
\end{proof}

\begin{theorem}\label{thm:One Big Theorem}
    Let $\X$ be a zero-nonzero pattern and let $k=\tri(\X)$.  
    If $\mr\mqual(\X) > k$, then every matroid $M\in \mqual(\X^T)$ of rank $k$ fails to have an adjoint.
\end{theorem}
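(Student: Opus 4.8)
The plan is to prove the contrapositive, which aligns naturally with the tools assembled in the excerpt. Suppose some matroid $M \in \mqual(\X^T)$ of rank $k = \tri(\X)$ does have an adjoint. I will show this forces $\mr\mqual(\X) \le k$, which combined with the lower bound $\tri(\X) \le \mr\mqual(\X)$ from \eqref{eqn:fundamental triangle number inequality} yields $\mr\mqual(\X) = k$, contradicting the hypothesis $\mr\mqual(\X) > k$.

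First I would apply \cref{lem:min rank of transpose} with the roles of $\X$ and $\X^T$ interchanged. Concretely, set $\Y = \X^T$, so that the hypothesis becomes: $M \in \mqual(\Y)$ is a matroid of rank $k$ possessing an adjoint. The lemma then delivers $\mr\mqual(\Y^T) \le k$. Since $\Y^T = (\X^T)^T = \X$, this reads exactly as $\mr\mqual(\X) \le k$, which is what I want. So the bulk of the work has already been done in \cref{lem:min rank of transpose}, and the main theorem is essentially a repackaging of it via the contrapositive together with the triangle-number lower bound.

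The one point requiring care is whether \cref{lem:min rank of transpose} applies verbatim: its statement presumes a matroid $M$ of rank $k$ in $\mqual(\X)$ having an adjoint, and concludes $\mr\mqual(\X^T) \le k$. Here I am feeding it the pattern $\Y = \X^T$ in place of the symbol $\X$ appearing in that lemma, so I should confirm that nothing in the lemma's proof assumed a special form for its input pattern beyond being an arbitrary zero-nonzero pattern. Reading the proof of \cref{lem:min rank of transpose}, the input pattern is indeed treated generically, so the substitution is legitimate. The only genuinely substantive ingredient is thus the hypothesis that $k$ is the \emph{triangle number} of $\X$ and not merely the rank of some arbitrary matroid; but for the contrapositive I only use $k = \tri(\X)$ to invoke the inequality $\tri(\X) \le \mr\mqual(\X)$.

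Assembling these pieces, the argument is short: assuming toward a contradiction that some $M \in \mqual(\X^T)$ of rank $k$ has an adjoint, \cref{lem:min rank of transpose} (applied to $\X^T$) gives $\mr\mqual(\X) \le k = \tri(\X)$; combined with $\tri(\X) \le \mr\mqual(\X)$ this forces equality, contradicting $\mr\mqual(\X) > k$. Hence no such $M$ can have an adjoint, and every rank-$k$ matroid in $\mqual(\X^T)$ fails to have one. I do not anticipate any serious obstacle here — the heavy lifting (the free-extension and column-appending constructions, and the translation through \cref{thm:adjoint and transpose of hyperplane-point pattern}) is encapsulated in \cref{lem:min rank of transpose,thm:extend}. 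The only thing to watch is the bookkeeping of which pattern plays which role under transposition, so that the final inequality lands on $\mr\mqual(\X)$ rather than $\mr\mqual(\X^T)$.
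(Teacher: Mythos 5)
Your proposal is correct and matches the paper's own proof essentially verbatim: the paper also proves the contrapositive by applying \cref{lem:min rank of transpose} to $\X^T$ to obtain $\mr\mqual(\X) \le k$, contradicting the hypothesis. Your extra remarks about the triangle-number lower bound and role-swapping bookkeeping are sound but not needed beyond what the paper records.
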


\begin{proof}
We prove the contrapositive. Assume there is a matroid $M \in \mqual(\X^T)$ of rank $k$ with an adjoint $M'$. Then \cref{lem:min rank of transpose} applied to $\X^T$ implies that $\mr\mqual(\X) \leq k$.
\end{proof}

It may happen that, for a zero-nonzero pattern $\X$ with $\tri(\X)=k$, neither $\mqual(\X)$ nor $\mqual(\X^T)$ contains a matroid of rank $k$.  That is, for both the pattern and its transpose, a gap exists between the triangle number and the matroid minimum rank.  In this case, the conclusion of Theorem \ref{thm:One Big Theorem} holds vacuously. On the other hand, when a gap exists for $\X$ but not for $\X^T$, there do exist matroids of rank $k$ in $\mqual(\X^T)$, and then Theorem \ref{thm:One Big Theorem} asserts that every one of those matroids fails to have an adjoint.

As an example of the latter case, it was already noted that the pattern $\X$ of Example \ref{ex:Example 25 pattern} satisfies 
$\mr\mqual(\X) = 5 > 4=\tri(\X)$, while $\mr\mqual(\X^T)=4$.  Hence, by Theorem \ref{thm:One Big Theorem}, every matroid in $\mqual(
\X^T)$ of rank $4$ must fail to have an adjoint.
Calculations with SageMath \cite{sagemath} show that there are, up to isomorphism, $38$ such matroids, including the V\'amos matroid, which was noted to be one of them in \cref{subsec:transpose and min rank}.

\section{Uniqueness and the fundamental pattern}\label{sec:uniqueness}

For a zero-nonzero pattern $\X$ with $\tri(\X)=k$, Theorem \ref{thm:One Big Theorem} shows that
a gap between $k$ and the minimum rank of a matroid in $\mqual(\X)$ implies the nonexistence of an adjoint for every matroid  in $\mqual(\X^T)$ of rank $k=\tri(\X^T)$.  (Note that the transpose has no effect on the triangle number.)
We may therefore be interested in the special case in which 
$\mqual(\X^T)$ contains a unique such matroid.
This motivates the following question that, for simplicity, we state without the transpose. 
\begin{question}\label{question:of uniqueness}
When does a zero-nonzero pattern $\X$ have the property that $\mqual(\X)$  contains exactly one matroid of rank $\tri(\X)$?
\end{question}

The following theorem gives a partial answer to Question \ref{question:of uniqueness}.

\begin{theorem}\label{thm:uniqueness for hyperplane-point pattern}
Let $\X$ be the fundamental pattern of a matroid $M$.  Then $\mr\mqual(\X)=\tri(\X)=r(M)$ and $M$ is the unique matroid of this rank in $\mqual(\X)$.
\end{theorem}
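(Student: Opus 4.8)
The plan is to separate the statement into the chain of equalities $\mr\mqual(\X)=\tri(\X)=r(M)$ and the uniqueness of $M$, since the former follows quickly from results already in hand while the latter carries the real content.

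For the equalities, I would first observe that $M$ itself lies in $\mqual(\X)$. Viewing $M$ as a combinatorial geometry on its set of points (which are the columns of $\X$), the zero set of the row indexed by a hyperplane $H$ is exactly the set of points contained in $H$, i.e., $H$ itself, which is a flat; hence every zero set of a row of $\X$ is a flat of $M$, so $M\in\mqual(\X)$ and thus $\mr\mqual(\X)\le r(M)$. For the matching lower bound, recall from \cref{subsec:triangle number} that $\tri(\X)$ is the height of $L(\X)$, that $L(\X)=L(M)$ by \cref{lem:CHPIP generates lattice of flats}, and that the height of the lattice of flats of a matroid equals its rank; together these give $\tri(\X)=r(M)$. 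Substituting into \eqref{eqn:fundamental triangle number inequality} yields $r(M)=\tri(\X)\le\mr\mqual(\X)\le r(M)$, so all three coincide.

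For uniqueness, I would take an arbitrary $M'\in\mqual(\X)$ with $r(M')=r(M)=:k$ and aim to show $M'=M$. By the definition of $\mqual(\X)$, every element of $L(\X)$ is a flat of $M'$, and since $L(\X)=L(M)$ this says precisely that every flat of $M$ is a flat of $M'$; equivalently, $M$ is a quotient of $M'$ on their common ground set $C$. From the flat containment one gets $\cl_{M'}(A)\subseteq\cl_M(A)$ for every $A\subseteq C$ (the set $\cl_M(A)$ is an $M'$-flat containing $A$), which immediately shows that every $M$-independent set is $M'$-independent, so $r_{M'}\ge r_M$ pointwise.

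The crux is to upgrade this to equality. I would invoke the quotient rank inequality $r_{M'}(Y)-r_{M'}(X)\ge r_M(Y)-r_M(X)$ for all $X\subseteq Y\subseteq C$, either citing the theory of quotients in \cite[Section 7.3]{o11} or proving it directly by extending a maximal $M$-independent subset of $X$ to one of $Y$ and checking, via $\cl_{M'}\subseteq\cl_M$, that the added elements remain $M'$-independent over $X$. This inequality says that $g(X):=r_{M'}(X)-r_M(X)$ is non-decreasing; since $g\ge 0$, $g(\emptyset)=0$, and $g(C)=r(M')-r(M)=0$, monotonicity forces $g\equiv 0$. Hence $r_{M'}=r_M$ and $M'=M$. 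I expect this final step, turning ``every flat of $M$ is a flat of $M'$'' together with equal ranks into $M'=M$, to be the main obstacle; everything preceding it is bookkeeping with the earlier lemmas.
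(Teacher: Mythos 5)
Your proposal is correct and takes essentially the same route as the paper: the chain of equalities is obtained exactly as in the paper's proof ($M\in\mqual(\X)$ gives the upper bound, and $\tri(\X)=$ height of $L(\X)=L(M)$ gives the lower bound), and the uniqueness argument rests on the same key observation that every flat of $M$ is a flat of $M'$, i.e., that $M$ is a quotient of $M'$, which together with $r(M')=r(M)$ forces $M'=M$. The only difference is that where the paper cites Oxley's quotient results (\cite[Proposition 7.3.6 and Corollary 7.3.4]{o11}) to finish, you reprove the equal-rank step from first principles via the rank-difference monotonicity argument, which is a correct, self-contained proof of the cited corollary.
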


\begin{proof}
Let $k = \tri(\X)$.
By \cref{lem:CHPIP generates lattice of flats}, we have $L(\X)=L(M)$.
Hence, $r(M)$, being equal to
the height of $L(M)$, must be equal to the height of $L(\X)$, which,
as discussed in \cref{subsec:triangle number}, is $\tri(\X)=k$.
Meanwhile, since each hyperplane of $M$ is a flat of $M$, we have $M\in\mqual(\X)$.
Hence, $\mr\mqual(\X) \le r(M)=k$.
But recall from \eqref{eqn:fundamental triangle number inequality} that $k \le \mr\mqual(\X)$ also holds.
Hence, $\mr\mqual(\X)=k=\tri(\X)=r(M)$.

Now suppose $M'\in \mqual(\X)$ has $r(M')=k$.
By the construction of $\X$ and the definition of $\mqual(\X)$, every hyperplane of $M$ is a flat of $M'$.  But every flat of $M$ is an intersection of hyperplanes of $M$,
and the flats of $M'$ are closed under intersection, so in fact every flat of $M$ (and not just every hyperplane) is a flat of $M'$.  By \cite[Proposition 7.3.6]{o11}, this implies that $M$ is a quotient of $M'$.  Since $r(M)=r(M')$, it then follows by \cite[Corollary 7.3.4]{o11} that $M=M'$.
\end{proof}

Theorem \ref{thm:uniqueness for hyperplane-point pattern} has interesting consequences in conjunction with the following prior result.

\begin{theorem}[{\cite[Corollary 29]{d20}}]
\label{thm:matrix min rank equality if representability}
Let $\mathbb F$ be an infinite field and $\X$ be a zero-nonzero pattern.  Then $\mr\mqual(\X) = \mr\qual_{\mathbb F}(\X)$ if and only if some matroid of minimum rank in $\mqual(\X)$ is representable over $\mathbb F$.
\end{theorem}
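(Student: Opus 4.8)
The statement is an ``if and only if,'' and the two directions are of very different character. For the forward direction, suppose $\mr\mqual(\X) = \mr\qual_{\F}(\X)$, and let $A \in \qual_{\F}(\X)$ be a matrix attaining the matrix minimum rank, so that $\rank A = \mr\qual_{\F}(\X)$. By \cref{thm:matrix in Q represents matroid in R}, the matroid $M[A]$ lies in $\mqual(\X)$, and its rank equals $\rank A = \mr\qual_{\F}(\X) = \mr\mqual(\X)$. Thus $M[A]$ is a matroid of minimum rank in $\mqual(\X)$, and it is representable over $\F$ by construction. This direction does not require $\F$ to be infinite.

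The substantive direction is the converse. Suppose $M \in \mqual(\X)$ has rank $k = \mr\mqual(\X)$ and is representable over $\F$. My plan is to manufacture a matrix $A \in \qual_{\F}(\X)$ with $\rank A \le k$; since \eqref{eqn:matroid min rank leq matrix min rank} gives $\mr\mqual(\X) \le \mr\qual_{\F}(\X)$, such a matrix forces $\mr\qual_{\F}(\X) = k$ and completes the proof. To build $A$, I would fix a representation of $M$ by column vectors $c_1, \dots, c_n \in \F^k$ indexed by the ground set $\setoneto{n}$ of $M$ (the columns of $\X$), and then define the $i$th row of $A$ to be a suitably chosen covector. Writing $A = FC$, where $C$ is the $k \times n$ matrix with columns $c_j$ and $F$ is the matrix whose rows are these covectors, we immediately obtain $\rank A \le k$, which is exactly the rank bound we need.

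The heart of the matter is selecting the covectors so that $A$ realizes the pattern $\X$ precisely. For each row $i$, let $V_i = \operatorname{span}(c_j : j \in Z(i))$ be the subspace spanned by the flat $Z(i)$ of $M$, and seek the $i$th covector $f_i$ inside the annihilator $V_i^{\perp}$. Any such $f_i$ automatically satisfies $f_i(c_j) = 0$ for every $j \in Z(i)$, matching the zero entries of row $i$. What remains is to arrange that $f_i(c_j) \ne 0$ for every $j$ in the support $S(i)$. Because $Z(i)$ is a flat, we have $c_j \notin V_i$ for each $j \in S(i)$, and consequently $\{f \in V_i^{\perp} : f(c_j) = 0\}$ is a \emph{proper} subspace of $V_i^{\perp}$; the covector I want is any element of $V_i^{\perp}$ avoiding the union of these finitely many proper subspaces.

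The one genuine obstacle is guaranteeing that such an $f_i$ exists, and this is exactly where the hypothesis that $\F$ is infinite is indispensable: a vector space of positive dimension over an infinite field is never a finite union of proper subspaces, so the required $f_i$ can always be found. (Over a finite field this can fail, which is why the hypothesis cannot be dropped.) Choosing the $f_i$ row by row in this manner, the resulting matrix $A = FC$ has pattern exactly $\X$ and rank at most $k$, which concludes the argument.
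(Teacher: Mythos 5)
There is nothing in the paper to compare your argument against: the paper imports this statement verbatim as \cite[Corollary 29]{d20} and gives no proof of it, so your proposal must be judged on its own merits. On those merits it is correct, and it is the standard route to this result (essentially the argument in the cited source). The forward direction is the intended application of \cref{thm:matrix in Q represents matroid in R}: a matrix $A$ attaining $\mr\qual_{\F}(\X)$ gives $M[A]\in\mqual(\X)$ with $r(M[A])=\rank A=\mr\mqual(\X)$, and $M[A]$ is representable over $\F$ by construction; as you note, infinitude of $\F$ plays no role here. For the converse, your factorization $A=FC$, with $C$ a $k\times n$ representation of $M$ and each row $f_i$ of $F$ drawn from the annihilator of $V_i=\operatorname{span}(c_j: j\in Z(i))$, works precisely because each zero set $Z(i)$ is a flat of $M$: in a representation, closure coincides with span, so $j\in S(i)$ forces $c_j\notin V_i$, each set $\{f\in V_i^{\perp}: f(c_j)=0\}$ is a proper subspace of $V_i^{\perp}$, and a positive-dimensional space over an infinite field is not a finite union of proper subspaces. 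Two degenerate cases are left implicit but are harmless: if row $i$ of $\X$ is all zeros, then $S(i)=\emptyset$ and $V_i^{\perp}$ may be the zero space, but $f_i=0$ serves; and any loop of $M$ lies in every flat, hence in every $Z(i)$, so the corresponding column of $\X$ is all zeros and the forced all-zero column of $A$ is consistent with the pattern. Neither omission is a substantive gap.
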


As a direct consequence of the preceding two theorems, we obtain the following.

\begin{theorem}\label{thm:uniqueness and representability}
Let $\mathbb F$ be an infinite field and $\X$ be the fundamental pattern of some matroid $M$.  Then $M$ is representable over $\mathbb F$ if and only if $\mr\qual_{\mathbb F}(\X) = \tri(\X)$.
\end{theorem}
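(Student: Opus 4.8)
The plan is to obtain this result as an immediate consequence of the two preceding theorems, with the one genuinely essential ingredient being the uniqueness assertion. First I would invoke \cref{thm:uniqueness for hyperplane-point pattern}: since $\X$ is the fundamental pattern of $M$, this gives simultaneously that $\mr\mqual(\X)=\tri(\X)=r(M)$ and that $M$ is the \emph{unique} matroid of minimum rank in $\mqual(\X)$. The first equality lets me freely interchange $\tri(\X)$ and $\mr\mqual(\X)$ throughout, while the uniqueness is what will let me upgrade an existential statement about representable minimum-rank matroids into a statement specifically about $M$.

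Next, because $\F$ is infinite, I would apply \cref{thm:matrix min rank equality if representability}, which asserts that $\mr\mqual(\X)=\mr\qual_{\F}(\X)$ holds if and only if some matroid of minimum rank in $\mqual(\X)$ is representable over $\F$. Here the uniqueness from the previous step does the crucial work: since $M$ is the only matroid of minimum rank in $\mqual(\X)$, the phrase ``some matroid of minimum rank in $\mqual(\X)$ is representable over $\F$'' is simply equivalent to ``$M$ is representable over $\F$.'' Thus $M$ is representable over $\F$ if and only if $\mr\mqual(\X)=\mr\qual_{\F}(\X)$.

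Finally, I would substitute $\mr\mqual(\X)=\tri(\X)$ to rewrite the right-hand condition as $\mr\qual_{\F}(\X)=\tri(\X)$, yielding the claimed equivalence. One may also note that the chain $\tri(\X)\le\mr\mqual(\X)\le\mr\qual_{\F}(\X)$ from \eqref{eqn:fundamental triangle number inequality}, together with $\mr\mqual(\X)=\tri(\X)$, shows directly that $\mr\qual_{\F}(\X)=\tri(\X)$ is equivalent to $\mr\qual_{\F}(\X)=\mr\mqual(\X)$, so no separate argument is needed at this step. I do not anticipate any real obstacle: all the difficulty has been absorbed into \cref{thm:uniqueness for hyperplane-point pattern}, whose uniqueness conclusion is precisely the hypothesis that converts \cref{thm:matrix min rank equality if representability} from a statement about the existence of \emph{some} representable minimum-rank matroid into one about $M$ itself.
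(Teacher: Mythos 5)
Your proposal is correct and is exactly the argument the paper intends: the paper states this theorem as ``a direct consequence of the preceding two theorems,'' and your write-up simply makes that deduction explicit, using the uniqueness from \cref{thm:uniqueness for hyperplane-point pattern} to convert the existential hypothesis of \cref{thm:matrix min rank equality if representability} into a statement about $M$ itself, and the equality $\mr\mqual(\X)=\tri(\X)$ to rewrite the rank condition. No gaps; this matches the paper's approach.
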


Given an infinite field  $\F$ and a matroid $M$  not representable over $\F$, let $\X$ be the fundamental pattern of $M$. Then, by \cref{thm:uniqueness and representability}, we have $\mr\qual_{\mathbb F}(\X) > \tri(\X)$. Thus, \cref{thm:uniqueness and representability} provides an easy way to find, for an infinite field, a pattern whose matrix minimum rank exceeds its triangle number over that field. In fact, as we show in Proposition \ref{prop:gap for all fields}, we can extend this idea to all fields by choosing a matroid that is not representable over \emph{any} field.

\begin{proposition}
    \label{prop:gap for all fields}
    Let $M$ be a non-representable matroid, and let $\X$ be  its fundamental pattern. Then $\mr\qual_{\mathbb F}(\X) > \tri(\X)$ for every field $\F$.
\end{proposition}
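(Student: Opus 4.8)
The plan is to reduce the claim to \cref{thm:uniqueness and representability} by exhausting the two cases that arise for a fixed field $\F$, namely whether $\F$ is infinite or finite. The statement $\mr\qual_{\F}(\X) > \tri(\X)$ must be established for \emph{every} field, so I would fix an arbitrary field $\F$ and argue in each case that the strict inequality holds, using that $M$ is representable over no field whatsoever.

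First, suppose $\F$ is infinite. Since $M$ is non-representable, in particular $M$ is not representable over $\F$. By \cref{thm:uniqueness and representability}, which applies precisely because $\X$ is the fundamental pattern of $M$ and $\F$ is infinite, we have that $M$ is representable over $\F$ if and only if $\mr\qual_{\F}(\X)=\tri(\X)$. As the left side fails, so does the right, and since $\tri(\X) \le \mr\qual_{\F}(\X)$ always holds by \eqref{eqn:fundamental triangle number inequality}, we conclude $\mr\qual_{\F}(\X) > \tri(\X)$.

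The main obstacle is the finite-field case, since \cref{thm:uniqueness and representability} and \cref{thm:matrix min rank equality if representability} both require $\F$ to be infinite, and so cannot be invoked directly. Here I would instead work through $\mr\mqual(\X)$ and the uniqueness furnished by \cref{thm:uniqueness for hyperplane-point pattern}. That theorem gives that $M$ is the \emph{unique} matroid of rank $\tri(\X)$ in $\mqual(\X)$, and that $\mr\mqual(\X)=\tri(\X)=r(M)$. Suppose for contradiction that $\mr\qual_{\F}(\X)=\tri(\X)$ for the finite field $\F$; then there is a matrix $A \in \qual_{\F}(\X)$ with $\rank A = \tri(\X)$. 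By \cref{thm:matrix in Q represents matroid in R}, the matroid $M[A]$ lies in $\mqual(\X)$, and its rank equals $\rank A = \tri(\X) = \mr\mqual(\X)$, so $M[A]$ is a matroid of minimum rank $\tri(\X)$ in $\mqual(\X)$. By the uniqueness in \cref{thm:uniqueness for hyperplane-point pattern}, we must have $M[A]=M$. But $M[A]$ is representable over $\F$ by construction, so $M$ would be $\F$-representable, contradicting that $M$ is non-representable. Hence $\mr\qual_{\F}(\X) > \tri(\X)$ in this case as well.

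Since $\F$ was an arbitrary field and the strict inequality holds whether $\F$ is infinite or finite, the conclusion follows for every field. I would note that the uniqueness argument of the second case in fact subsumes the first, so a single unified argument via \cref{thm:uniqueness for hyperplane-point pattern} and \cref{thm:matrix in Q represents matroid in R} handles all fields simultaneously without splitting into cases; the key leverage is precisely that minimum-rank matroid in $\mqual(\X)$ is forced to be $M$ itself, so any matrix realizing the triangle number would represent $M$ over $\F$.
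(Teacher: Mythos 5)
Your proposal is correct, and the unified argument you identify at the end---contradiction via \cref{thm:matrix in Q represents matroid in R} together with the uniqueness in \cref{thm:uniqueness for hyperplane-point pattern}---is exactly the paper's proof. The initial split into infinite and finite fields is superfluous, as you yourself observe, since the uniqueness argument never uses any hypothesis on $\F$.
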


\begin{proof}
    Let $\F$ be any field.
    By \cref{thm:uniqueness for hyperplane-point pattern}, we have $r(M)=\tri(\X)$. We know from \eqref{eqn:fundamental triangle number inequality} that $\mr\qual_{\mathbb F}(\X) \geq \tri(\X)$. Suppose for a contradiction that $\mr\qual_{\mathbb F}(\X) = \tri(\X)$. Then, by \cref{thm:matrix in Q represents matroid in R}, there is a matroid $M'\in\mqual(\X)$ that is representable over $\F$ and has rank $\tri(\X)$. But, by \cref{thm:uniqueness for hyperplane-point pattern}, this implies that $M'=M$, which contradicts the non-representability of $M$.
\end{proof}

Given any matroid $M$ that is not representable, \cref{thm:uniqueness for hyperplane-point pattern} and Proposition \ref{prop:gap for all fields} give a pattern $\X$ with $\tri(\X) = \mr\mqual(\X) < \mr\qual_\F(\X)$ over every field $\F$.  That is, this pattern has a matroid minimum rank equal to its triangle number while exhibiting, due to representability considerations, a gap between the triangle number and the matrix minimum rank over every field.  The smallest previously-known example of 
such a pattern,
namely the smallest member of the family given by \cite[Corollary 39]{d20}, has size $91\times 91$.  Here, by choosing $M$ to be, for example, the matroid $F_8$ (see \cite[p.~647]{o11}), we obtain a much smaller
example.
In particular, $F_8$ is not representable over any field, has $8$ points and $20$ hyperplanes, and therefore via Proposition \ref{prop:gap for all fields} yields a $20\times 8$ pattern with the desired property.  Similarly, using the non-Pappus matroid instead, we obtain such a pattern of size $20\times 9$.

Another application of Theorem \ref{thm:uniqueness and representability} is the following corollary.

\begin{corollary}\label{cor:represenability gap}
Let $M$ be a matroid, and let $\X$ be its fundamental pattern.
If $\mathbb F_1$ and $\mathbb F_2$ are infinite fields, with $M$ representable over $\mathbb F_1$ but not over $\mathbb F_2$, then
\[
\mr\qual_{\mathbb F_2}(\X) > \tri(\X) = \mr\qual_{\mathbb F_1}(\X).
\]
\end{corollary}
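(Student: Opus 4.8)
The plan is to apply \cref{thm:uniqueness and representability} separately to each of the two fields, which is legitimate because $\X$ is the fundamental pattern of $M$ and both $\mathbb{F}_1$ and $\mathbb{F}_2$ are infinite. First I would handle $\mathbb{F}_1$: since $M$ is representable over $\mathbb{F}_1$, the relevant direction of the biconditional in \cref{thm:uniqueness and representability} immediately yields $\mr\qual_{\mathbb{F}_1}(\X) = \tri(\X)$. This establishes the equality appearing on the right-hand side of the claimed chain.

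Next I would treat $\mathbb{F}_2$. Here $M$ is \emph{not} representable over $\mathbb{F}_2$, so the contrapositive of \cref{thm:uniqueness and representability} gives $\mr\qual_{\mathbb{F}_2}(\X) \neq \tri(\X)$. By itself this records only a failure of equality, not a strict inequality in the desired direction, so the single point requiring care is to upgrade $\neq$ to $>$. For this I would invoke the fundamental lower bound \eqref{eqn:fundamental triangle number inequality}, which guarantees $\tri(\X) \le \mr\qual_{\mathbb{F}_2}(\X)$ over every field. Combining this with $\mr\qual_{\mathbb{F}_2}(\X) \neq \tri(\X)$ forces $\mr\qual_{\mathbb{F}_2}(\X) > \tri(\X)$.

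Finally I would assemble the two conclusions into the stated display, chaining $\mr\qual_{\mathbb{F}_2}(\X) > \tri(\X)$ with $\tri(\X) = \mr\qual_{\mathbb{F}_1}(\X)$. I do not anticipate any genuine obstacle, as the corollary is a direct specialization of \cref{thm:uniqueness and representability} to the case of two fields with opposite representability behavior; the only subtlety worth flagging is the use of \eqref{eqn:fundamental triangle number inequality} to convert the failure of equality over $\mathbb{F}_2$ into the asserted strict inequality.
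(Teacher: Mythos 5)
Your proposal is correct and is exactly the argument the paper intends: the corollary is presented there as a direct application of \cref{thm:uniqueness and representability}, applied once to $\mathbb{F}_1$ (giving the equality) and once, in contrapositive form, to $\mathbb{F}_2$, with the lower bound \eqref{eqn:fundamental triangle number inequality} upgrading the resulting non-equality to the strict inequality $\mr\qual_{\mathbb{F}_2}(\X) > \tri(\X)$. Your explicit flagging of that last upgrade step is the only detail the paper leaves implicit, and it is handled correctly.
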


We can use Corollary \ref{cor:represenability gap} to recover some of the main results of \cite{berman_et_al}.  First, we observe that the
method given there to construct
a zero-nonzero pattern (there called the ``cycle matrix'') from a matrix representing a matroid in fact results in the fundamental pattern of the dual of that matroid. 
It is well known that a matroid is representable over a field if and only if its dual is representable over that field.
Hence, taking $M$ in \cref{cor:represenability gap} to be the dual of the matroid $AG(2,3)$ (see \cref{fig:AG23-})
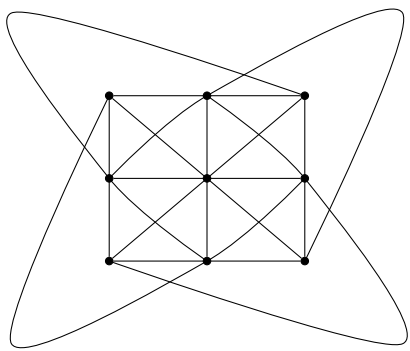
\begin{figure}
\[\begin{tikzpicture}[x=1.3cm, y=1.1cm]
	\vertex[fill,inner sep=1pt,minimum size=1pt] (1) at (0,2) [label=above:] {};
 	\vertex[fill,inner sep=1pt,minimum size=1pt] (2) at (2,2) [label=above:] {};
 	\vertex[fill,inner sep=1pt,minimum size=1pt] (3) at (0,0) [label=below:] {};
	\vertex[fill,inner sep=1pt,minimum size=1pt] (4) at (1,2) [label=below:] {};
 	\vertex[fill,inner sep=1pt,minimum size=1pt] (5) at (0,1) [label=left:] {};
 	\vertex[fill,inner sep=1pt,minimum size=1pt] (6) at (2,0) [label=below:] {};
 	\vertex[fill,inner sep=1pt,minimum size=1pt] (7) at (2,1) [label=right:] {};
 	\vertex[fill,inner sep=1pt,minimum size=1pt] (8) at (1,0) [label=above:] {};
    \vertex[fill,inner sep=1pt,minimum size=1pt] (9) at (1,1) [label=above:] {};
 	\path
 		(1) edge (3)
 		(1) edge (2)
 		(3) edge (6)
 		(2) edge (6)
            (3) edge (2)
            (5) edge (7)
            (4) edge (8)
            (1) edge (6);
\draw plot [smooth] coordinates {(0,0) (3,-1) (2,1) (1,2)};
\draw plot [smooth] coordinates {(0,1) (1,2) (3,3) (2,0)};
\draw plot [smooth] coordinates {(0,2) (-1,-1) (1,0) (2,1)};
\draw plot [smooth] coordinates {(1,0) (0,1) (-1,3) (2,2)};
\end{tikzpicture}\]
\caption{The matroid $\mathrm{AG}(2,3)$}
  \label{fig:AG23-}
\end{figure}
recovers the inequality of \cite[Corollary 2.4]{berman_et_al}, which shows that the pattern has a smaller minimum rank over $\mathbb C$ than over $\mathbb R$.  This is because $AG(2,3)$ is representable over $\mathbb C$ but not $\mathbb R$. Similarly, taking $M$ to be the dual of the Betsy Ross matroid (see \cref{fig:Betsy-Ross}) 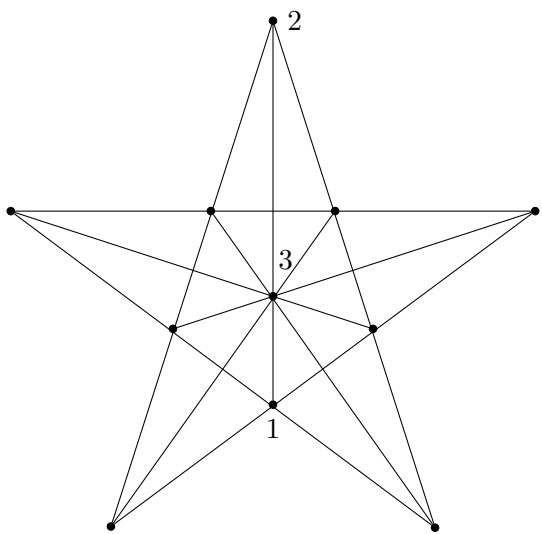
\begin{figure}
	\begin{center}
		\[\begin{tikzpicture}[x=1.4cm, y=1.4cm]
		\vertex[fill,inner sep=1pt,minimum size=1pt] (a) at (0,0) 		[label=right:$$] {};
		\vertex[fill,inner sep=0pt,minimum size=0pt] (a) at (-0.05,.35) 		[label=right:$3$] {};
		\vertex[fill,inner sep=1pt,minimum size=1pt] (b) at (0,-1.03) 		[label=below:$1$] {};
		\vertex[fill,inner sep=1pt,minimum size=1pt] (c) at (0.95,-0.31) 	[label=-45:] {};
		\vertex[fill,inner sep=1pt,minimum size=1pt] (d) at (0.59,0.81) 	[label=45:] {};
		\vertex[fill,inner sep=1pt,minimum size=1pt] (e) at (-0.59,0.81) 	[label=135:] {};
		\vertex[fill,inner sep=1pt,minimum size=1pt] (f) at (-0.95,-0.31)	[label=225:] {};
		\vertex[fill,inner sep=1pt,minimum size=1pt] (b') at (0,2.618) 	[label=right:$2$] {};
		\vertex[fill,inner sep=1pt,minimum size=1pt] (c') at (-2.49,0.809) 	[label=above:] {};
		\vertex[fill,inner sep=1pt,minimum size=1pt] (d') at (-1.539,-2.188) 	[label=left:] {};
		\vertex[fill,inner sep=1pt,minimum size=1pt] (e') at (1.539,-2.199) 	[label=right:] {};
		\vertex[fill,inner sep=1pt,minimum size=1pt] (f') at (2.490,0.809) 	[label=above:] {};
		\path
		(b) edge (b')
		(c) edge (c')
		(d) edge (d')
		(e) edge (e')
		(f) edge (f')
		(d') edge (b')
		(b') edge (e')
		(e') edge (c')
		(f') edge (d')
		(c') edge (f');
		\end{tikzpicture}\]
		\caption{The Betsy Ross matroid}
		\label{fig:Betsy-Ross}
		\end{center}
\end{figure} yields the inequality of \cite[Corollary 2.7]{berman_et_al}, which shows that the pattern has a smaller minimum rank over $\mathbb R$ than over $\mathbb Q$. This is because the Betsy Ross matroid is representable over $\mathbb R$ but not $\mathbb Q$.

The patterns considered above are somewhat large, with sizes $66\times9$ and $170\times11$, respectively. These sizes result from the fact that $AG(2,3)$ and the Betsy Ross matroid have $9$ and $11$ points, respectively, and $66$ and $170$ circuits, respectively. (Details of these counts are given in \cite{berman_et_al}.) 
This implies that their dual matroids have $66$ and $170$ hyperplanes, respectively.  But both $AG(2,3)$ and the Betsy Ross matroid have smaller numbers of hyperplanes than their duals, and therefore we may obtain smaller patterns that serve as examples with the same properties by applying \cref{cor:represenability gap} directly to $AG(2,3)$ and the Betsy Ross matroid rather than their dual matroids. There are $12$ hyperplanes in $AG(2,3)$,
namely each of its three-point lines.
Therefore, \cref{cor:represenability gap} gives a $12\times9$ pattern $\X$ with $\mr\qual_{\mathbb R}(\X) > \mr\qual_{\mathbb C}(\X)$. There are $20$ hyperplanes in the Betsy Ross matroid. (These hyperplanes comprise five lines with four points, five lines with three points, and ten lines with two points.) Therefore, \cref{cor:represenability gap} gives a $20\times11$ pattern $\X$ with $\mr\qual_{\mathbb Q}(\X) >  \mr\qual_{\mathbb R}(\X)$.

In fact, we can make the patterns even smaller. The matroid $AG(2,3)$ is highly symmetric, in that the deletion of any one of its nine points results in the same matroid, up to isomorphism. Call this matroid $AG(2,3)\backslash e$. Since every hyperplane of $AG(2,3)$ has three points, deleting an element does not change the number of hyperplanes. Thus, $AG(2,3)\backslash e$ still has $12$ hyperplanes (eight lines with three points and four lines with two points). It is fairly well known (and not difficult to show; see \cite[Example 6.11 and Exercise 6.29]{gm12}) that $AG(2,3)\backslash e$ is representable over $\mathbb{C}$ but not $\mathbb{R}$. Therefore, \cref{cor:represenability gap} with $M=AG(2,3)\backslash e$ gives a $12\times8$ pattern $\X$ with $\mr\qual_{\mathbb R}(\X) >  \mr\qual_{\mathbb C}(\X)$.

By deleting points $1$ and $2$ from the Betsy Ross matroid (as labeled in \cref{fig:Betsy-Ross}), we obtain what we will call the Perles matroid, whose geometric representation is the Perles configuration. This point-and-line configuration was introduced by Micha Perles, who showed that it cannot be realized in the plane with rational coordinates \cite{zie08}.  This implies that the Perles matroid is representable over $\mathbb{R}$ but not $\mathbb{Q}$ (just like the Betsy Ross matroid). The lines in the Betsy Ross matroid that still have at least two points after points $1$ and $2$ are deleted are the hyperplanes of the Perles matroid. Each of the points $1$ and $2$ is in two lines that contain only two points. These, in addition to the line $\{1,2,3\}$, are the five hyperplanes that do not persist after points $1$ and $2$ are deleted. Thus, the Perles matroid has 15 hyperplanes, and \cref{cor:represenability gap}, applied with $M$ taken to be the Perles matroid, gives a $15\times9$ pattern $\X$ with $\mr\qual_{\mathbb Q}(\X) >  \mr\qual_{\mathbb R}(\X)$.

Patterns with the above properties that are even smaller still are obtained in \cite{d20} from $AG(2,3)\backslash e$ and the Perles matroid by applying \cite[Theorem 36]{d20}.
That theorem applies only to matroids of rank $3$ satisfying certain combinatorial conditions, and allows for only the rows corresponding to the \emph{dependent} hyperplanes to be included in the construction of the pattern, while guaranteeing the same uniqueness provided by \cref{thm:uniqueness for hyperplane-point pattern}.  The question of whether (and how) this could be done in general is one we highlight in \cref{sec:questions and future directions}.

In the remainder of this section, we establish a partial converse to \cref{thm:One Big Theorem}, in the setting of the uniqueness given by \cref{thm:uniqueness for hyperplane-point pattern}.

As a motivating example, consider the V\'amos matroid $V$.  Recall that $V$ has rank $4$ and no adjoint.  Let $\X$ be such that $\X^T$ is the fundamental pattern of $V$.  By \cref{thm:uniqueness for hyperplane-point pattern}, the class $\mr\mqual(\X^T)$ contains exactly one matroid of rank $4=\tri(\X^T)$, namely $V$.  Hence, by \cref{thm:adjoint and transpose of hyperplane-point pattern}, any matroid of rank $4$ in $\mqual(\X)$ would be an adjoint of $V$.  But such an adjoint does not exist, and therefore $\mr\mqual(\X) > 4 = \tri(\X)$.

The following result generalizes the above example, giving a partial converse to \cref{thm:One Big Theorem}.
Recall that, for a zero-nonzero pattern $\X$ with $\tri(\X)=k$, we also have $\tri(\X^T)=k$, so that
$k$ is a lower bound on the rank of every matroid occurring in either $\mqual(\X)$ or $\mqual(\X^T)$.
\cref{thm:One Big Theorem} showed that if there is
a gap between $k$ and the minimum rank of a matroid in $\mqual(\X)$, then every matroid of rank $k$
in $\mqual(\X^T)$ fails to have an adjoint.
But there is exactly
one matroid $M$ of rank $k$ in $\mqual(\X^T)$
 when $\X^T$ is the fundamental pattern of $M$.  This is the uniqueness given by \cref{thm:uniqueness for hyperplane-point pattern}.
In this setting,
we have a converse to \cref{thm:One Big Theorem}:\ If $M$ fails to have an adjoint, then there is a gap between $k$ and the minimum rank of a matroid in $\mqual(\X)$.  That is the content of the following theorem.

\begin{theorem}
    Let $M$ be a matroid and let $\X$ be a zero-nonzero pattern such that $\X^T$ is the fundamental pattern of $M$.  Then $M$ has no adjoint if and only if $\mr\mqual(\X) > \tri(\X)$.
\end{theorem}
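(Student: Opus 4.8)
The plan is to read off both directions from the equivalence provided by \cref{thm:adjoint and transpose of hyperplane-point pattern}, applied with $\X^T$ in the role of the fundamental pattern. First I would set $k=\tri(\X)$ and recall that transposing a pattern does not change its triangle number, so $k=\tri(\X^T)$ as well. Since $\X^T$ is the fundamental pattern of $M$, \cref{thm:uniqueness for hyperplane-point pattern} then gives $r(M)=\tri(\X^T)=k$. This is the crucial coincidence: it pins down the rank at which any adjoint of $M$ must live to be exactly the triangle number $k$, and it is this identification of $r(M)$ with $\tri(\X)$ on which the whole reduction rests.

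Next I would invoke \cref{thm:adjoint and transpose of hyperplane-point pattern} with $\X^T$ playing the role of the fundamental pattern there; its transpose is $(\X^T)^T=\X$. The theorem then reads: a matroid $M'$ is an adjoint of $M$ if and only if $M'\in\mqual(\X)$ and $r(M')=r(M)=k$. Hence $M$ has an adjoint precisely when $\mqual(\X)$ contains a matroid of rank exactly $k$. To convert this into a statement about $\mr\mqual(\X)$, I would use \eqref{eqn:fundamental triangle number inequality}, which guarantees $\mr\mqual(\X)\ge\tri(\X)=k$, so every matroid in $\mqual(\X)$ has rank at least $k$. Consequently $\mqual(\X)$ contains a matroid of rank exactly $k$ if and only if $\mr\mqual(\X)=k=\tri(\X)$. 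Combining the two equivalences, $M$ has an adjoint if and only if $\mr\mqual(\X)=\tri(\X)$, and negating both sides yields the claimed statement that $M$ has no adjoint if and only if $\mr\mqual(\X)>\tri(\X)$.

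As a sanity check on the ``gap implies no adjoint'' direction, I would note that it also follows directly from \cref{thm:One Big Theorem} together with the uniqueness supplied by \cref{thm:uniqueness for hyperplane-point pattern}: if $\mr\mqual(\X)>k$, then by the main theorem every rank-$k$ matroid in $\mqual(\X^T)$ fails to have an adjoint, and by uniqueness $M$ is the only such matroid, so $M$ has no adjoint. The genuinely new content is therefore the converse, which is exactly what the direct application of \cref{thm:adjoint and transpose of hyperplane-point pattern} delivers. The argument is essentially a bookkeeping assembly of results already established, so I expect no serious obstacle; the one point demanding care is keeping the transpose roles straight, since \cref{thm:adjoint and transpose of hyperplane-point pattern} is phrased for a pattern that \emph{is} fundamental whereas here it is $\X^T$ that is fundamental, so one must apply the theorem to $\X^T$ and simplify $(\X^T)^T=\X$, and then verify that the rank constraint $r(M')=r(M)$ in that theorem lines up with the value $k=\tri(\X)$ forced by \cref{thm:uniqueness for hyperplane-point pattern}.
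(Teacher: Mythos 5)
Your proposal is correct and takes essentially the same route as the paper: both arguments rest on exactly the same ingredients, namely \cref{thm:uniqueness for hyperplane-point pattern} to pin down $r(M)=\tri(\X^T)=\tri(\X)$, \cref{thm:adjoint and transpose of hyperplane-point pattern} applied to $\X^T$ (so that $(\X^T)^T=\X$) to identify adjoints of $M$ with rank-$r(M)$ matroids in $\mqual(\X)$, and the inequality \eqref{eqn:fundamental triangle number inequality} to translate the existence or nonexistence of such a matroid into $\mr\mqual(\X)=\tri(\X)$ or $\mr\mqual(\X)>\tri(\X)$. The only difference is presentational: you chain the equivalences into a single biconditional, while the paper writes out the two directions separately.
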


\begin{proof}
First, suppose $M$ has an adjoint $M'$. By \cref{thm:uniqueness for hyperplane-point pattern}, $\mr\mqual(\X^T)=\tri(\X^T)=r(M)$. Since a pattern and its transpose have the same triangle number, $\tri(\X)=\tri(\X^T)=r(M)=r(M')$. Recall from \eqref{eqn:fundamental triangle number inequality} that $\mr\mqual(\X)\geq\tri(\X)$ for every pattern $\X$. By \cref{thm:adjoint and transpose of hyperplane-point pattern}, we have $M'\in\mqual(\X)$. Therefore, since $r(M')=\tri(\X)$, we conclude that $\mr\mqual(\X)=\tri(\X)$.

Conversely, assume $M$ has no adjoint.  Then \cref{thm:adjoint and transpose of hyperplane-point pattern} applied to $\X^T$ implies that no matroid in $\mqual(\X)$ can have rank equal to $r(M)$. Thus, $\mr\mqual(\X) > r(M)$ and \cref{thm:uniqueness for hyperplane-point pattern} implies that $r(M)=\tri(\X^T)=\tri(\X)$.    
\end{proof}

\section{Questions and future directions}\label{sec:questions and future directions}

\cref{thm:One Big Theorem} states that, for a pattern $\X$ with $\tri(\X)=k$, a gap of $\mr\mqual(\X) > k$ implies that every matroid of rank $k$ in $\mqual(\X^T)$ fails to have an adjoint.
The naive converse would state that when every matroid of rank $k$ in the class $\mqual(\X^T)$ fails to have an adjoint, $\mr\mqual(\X) > k$.
One complication, however, is the possibility that this holds vacuously in that  $\mqual(\X^T)$ simply does not contain any matroids of rank $k$.  

As an example of this phenomenon, let $\Y$ be the pattern given in \eqref{eqn:ex 25 transpose}.  Then $\Y^T$ is the pattern of \cref{ex:Example 25 pattern}.  
As observed in that example,
this pattern has $\tri(\Y^T)=4$, but $\mqual(\Y^T)$ does not contain any matroids of rank $4$.  Vacuously, then, every matroid of rank $4$ in $\mqual(\Y^T)$ fails to have an adjoint.  Even so, we have that $\mr\mqual(\Y)=4=\tri(\Y)$, as observed at the start of \cref{subsec:transpose and min rank}.

Hence, the ``naive converse'' to \cref{thm:One Big Theorem} described above fails to hold.
In seeking a converse 
that avoids this complication, it is natural to consider the following.

\begin{question}
Let $k=\tri(\X)$ and suppose the class $\mqual(\X^T)$ contains at least one matroid of rank $k$.  If every such matroid fails to have an adjoint, does it follow that $\mr\mqual(\X) > k$?
\end{question}

Another goal for future research would be to establish the uniqueness 
of \cref{thm:uniqueness for hyperplane-point pattern} for more than just the fundamental pattern $\X$ of a matroid $M$.
In particular,
a natural question is whether all of the rows of $\X$ are really necessary.
Specifically, it seems that often some rows can be deleted to give a smaller pattern $\X'$ with the same triangle number such that $M$ remains the unique matroid of smallest rank in $\mqual(\X')$.
Along these lines, \cite[Theorem 36]{d20} shows that when $M$ is a matroid of rank $3$ meeting certain combinatorial conditions, it suffices to keep only the rows of $\X$ corresponding to the \emph{dependent} hyperplanes of $M$.  But the question of what in general characterizes a collection of hyperplanes giving this uniqueness seems ripe for exploration.

\begin{question}
\label{que:how many hyperplanes}
Let $M$ be a matroid and let $\X$ be its fundamental pattern.  What conditions on a collection $\mathcal C$ of hyperplanes of $M$ are sufficient to imply that when only the rows of $\X$ corresponding to those hyperplanes in $\mathcal C$ are retained (meaning that all others are deleted), the result is a zero-nonzero pattern $\X'$ with the property that $M$ is the unique matroid of minimum rank in $\mqual(\X')$?
\end{question}

The above question leads to the following purely matroid-theoretic problem.

\begin{question}\label{q:minimal hyperplane collection for uniqueness in strictly matroid terms}
Let $M$ be a matroid of rank $k$.  Determine a minimal collection $\mathcal C$ of hyperplanes of $M$ with the property that whenever $M'$ is a matroid of rank $k$ with $E(M')=E(M)$ such that every set in $\mathcal C$ is a flat of $M'$, then in fact $M'=M$.
\end{question}

Note that in \cref{q:minimal hyperplane collection for uniqueness in strictly matroid terms}, replacing the condition that each set in $\mathcal C$ is a flat of $M'$ with the stronger condition that each such set is actually a hyperplane of $M'$ results in a problem dual (and hence equivalent) to that of determining a minimal set of circuits that, along with the rank, are sufficient to determine the matroid. In fact, problems closely related to this have already received some attention (in, e.g., \cite{m2014,oxley_semple_whittle,md2017}).

A better understanding of Questions \ref{que:how many hyperplanes} and \ref{q:minimal hyperplane collection for uniqueness in strictly matroid terms} seems likely to reveal that in many cases a matroid (of a given rank) is determined by a relatively small collection of its hyperplanes.
This would allow smaller patterns to suffice for examples like those in Section \ref{sec:uniqueness}, where properties (such as representability) of the matroid translate into desired properties of the resulting pattern.

\section*{Acknowledgments}

In support of the research presented here, the first author received grant-in-aid funding from the  College of Arts \& Sciences of Quinnipiac University. The second author was supported in part by an AMS-Simons Travel Grant.
The authors are indebted to
H.~Tracy Hall for the suggestion to consider the notion of a matroid adjoint as relevant to the matroid minimum rank problem, 
made to the first author following the workshop \emph{Zero forcing and its applications} at the American Institute of Mathematics in 2017.
Python code for the exhaustive search of matroids of a fixed rank within a given pattern class,  referenced at the end of \cref{sec:transposes and adjoints}, was written by Aiden Rosen as part of the capstone project he completed for his computer science major at Quinnipiac University.

\bibliographystyle{alpha}
\bibliography{main.bib}

\end{document}